\providecommand{\U}[1]{\protect\rule{.1in}{.1in}}
\newtheorem{theorem}{Theorem} [section]
\newtheorem{lemma}{Lemma}[section]
\newtheorem{remark}{Remark} [section]
\newenvironment{proof}[1][Proof]{\textbf{#1.} }{\ \rule{1em}{1em}}
\numberwithin{equation}{section}
\begin{document}

\title{Linear instability of Vlasov-Maxwell systems revisited-A Hamiltonian approach}
\author{Zhiwu Lin\\School of Mathematics\\Georgia Institute of Technology\\Atlanta, GA 30332, USA}
\date{}
\maketitle

\begin{abstract}
We consider linear stability of steady states of 1$\frac{1}{2}$ and 3D
Vlasov-Maxwell systems for collisionless plasmas. The linearized systems can
be written as separable Hamiltonian systems with constraints. By using a
general theory for separable Hamiltonian systems, we recover the sharp linear
stability criteria obtained previously by different approaches. Moreover, we
obtain the exponential trichotomy estimates for the linearized Vlasov-Maxwell
systems in both relativistic and nonrelativistic cases.

\end{abstract}

\section{Introduction}

Consider a plasma at high temperature, of low density such that collisions can
be ignored compared with the electromagnetic forces. Such a collisionless
plasma is modeled by the relativistic Vlasov-Maxwell system. In applications,
the classical Vlasov-Maxwell system is also considered when the effect of
special relativity is negligible. One of the central problems in the theory of
plasmas is to understand plasma stability and instability. The stability
problem of Vlasov plasmas is complicated partly because of the instability is
usually due to the collective behavior of all the particles. This makes the
instability problem highly nonlocal and difficult to study analytically. It is
also challenging numerically since the distribution is defined in the phase
space with a dimension doubling the space dimension. In a series of works
(\cite{lin-strauss1} \cite{lin-strauss2} \cite{lin-strauss-nl}), a sharp
stability criterion was obtained for certain equilibria of $1\frac{1}{2}$D
Vlasov-Maxwell system and $3\,$D relativistic Vlasov-Maxwell system with
cylindrical symmetry. More specifically, when the steady distribution function
has a monotonic dependence on the particle energy, the number of unstable
modes of linearized RVM systems is shown to be equal to $n^{-}\left(
\mathcal{L}^{0}\right)  $, the number of negative eigenvalues of a
self-adjoint operator $\mathcal{L}^{0}\ $(see (\ref{defn-cal-L0-1.5}) and
(\ref{defn-cal-L0-3d})) acting on functions depending only on space variables.
In these works, the existence of unstable eigenfunctions was shown by
introducing a family of non-local self-adjoint operators $\mathcal{A}%
^{\lambda\ }$for electromagnetic potentials, where the positive parameter
$\lambda\ $is the possible unstable eigenvalue. Then an instability criteria
was obtained by using a continuity argument to exploit the gap of numbers of
negative eigenvalues of $\mathcal{A}^{\lambda\ }$when $\lambda\rightarrow
\infty$ and $\lambda\rightarrow0+$. The proof was particularly involved for
the 3D Vlasov-Maxwell systems (\cite{lin-strauss2}) since the self-adjoint
formulation of $\mathcal{A}^{\lambda\ }$relied on a careful choice of the
gauge condition of the electromagnetic potentials. Moreover, the operator
$\mathcal{A}^{\lambda\ }$in $3$D has an infinite number of negative
eigenvalues and a truncation of $\mathcal{A}^{\lambda\ }$ has to be introduced
in order to use the continuity argument. The linear stability criterion
$\mathcal{L}^{0}\geq0$ was proved by studying the invariant functionals of the
linearized Vlasov-Maxwell systems.

In this paper, we study the linearized Vlasov-Maxwell systems by using a
framework of separable Hamiltonian systems, which was recently developed in
\cite{lin-zeng-EP} when studying the stability of nonrotating stars. Consider
a linear Hamiltonian PDEs of the separable form%
\begin{equation}
\partial_{t}\left(
\begin{array}
[c]{c}%
u\\
v
\end{array}
\right)  =\left(
\begin{array}
[c]{cc}%
0 & B\\
-B^{\prime} & 0
\end{array}
\right)  \left(
\begin{array}
[c]{cc}%
L & 0\\
0 & A
\end{array}
\right)  \left(
\begin{array}
[c]{c}%
u\\
v
\end{array}
\right)  =\mathbf{JL}\left(
\begin{array}
[c]{c}%
u\\
v
\end{array}
\right)  , \label{hamiltonian-separated}%
\end{equation}
where $u\in X,\ v\in Y$ and $X,Y$ are real Hilbert spaces. The triple $\left(
L,A,B\right)  $ is assumed to satisfy assumptions (G1)-(G3) in Section
\ref{section-abstract}, which roughly speaking require that $B:Y^{\ast}\supset
D(B)\rightarrow X$ \ is a densely defined closed operator, $L:X\rightarrow
X^{\ast}$ is bounded and self-dual with finitely many nonpositive modes, and
$A:Y\rightarrow Y^{\ast}$ is bounded, self-dual and positive. Then the number
of unstable modes of (\ref{hamiltonian-separated}) is shown to be equal to
$n^{-}\left(  L|_{\overline{R\left(  B\right)  }}\right)  $, which is the
number of negative directions of the quadratic form $\left\langle L\cdot
,\cdot\right\rangle $ restricted to the subspace $\overline{R\left(  B\right)
}\subset X$. Moreover, exponential trichotomy estimates are obtained for the
solution group $e^{tJL}$. See Theorem \ref{T:abstract} for the detailed
statements. By using a parity splitting of the distribution function, we are
able to rewrite the linearized $1\frac{1}{2}$D and $3$D Vlasov-Maxwell systems
in the separable Hamiltonian forms (\ref{hamiltonian-separated}) with the
constraint of the Poisson equation for the electric fields
((\ref{constraint-E}) for $1\frac{1}{2}$D and (\ref{eqn-maxwell-3}) for $3$D).
The assumption (\textbf{G1-3}) can be verified in an energy space $X$ and the
number $n^{-}\left(  L|_{\overline{R\left(  B\right)  }}\right)  $ is shown to
be exactly equal to $n^{-}\left(  \mathcal{L}^{0}\right)  $. Then by Theorem
\ref{T:abstract}, we recover the stability criterion obtained in
\cite{lin-strauss1} and \cite{lin-strauss2}. Moreover, we also obtain the
exponential trichotomy estimates for the linearized Vlasov-Maxwell systems.
These estimates will be useful for proving nonlinear instability or
constructing invariant (stable, unstable and center) manifolds near an
unstable steady state. The exponential trichotomy for the linearized
relativistic $1\frac{1}{2}$D Vlasov-Maxwell system can be shown
(\cite{lin-strauss2} \cite{guo-strauss}) by using the compact perturbation
($A$-smoothing) theory of semigroups, where the separation of characteristics
of the relativistic Vlasov equation and Maxwell system played a crucial role
in the proof. Such a separation is possible since the particle velocity in the
relativistic case is always less than the speed of light which is the
propagation speed of the Maxwell systems. However, for the nonrelativistic
Vlasov-Maxwell system such a separation of characteristics is no longer true
since the particle might travel faster than the speed of light, and as a
consequence the same arguments fail. By using the separable Hamiltonian
structures, the exponential trichotomy is obtained for both relativistic and
nonrelativistic Vlasov-Maxwell system. Moreover, we get more precise growth
estimates (i.e. at most quadratic growth) on the center space. In particular,
there is Liapunov stability on the center space when $\mathcal{L}^{0}$ has no kernel.

We make some comments to compare the Hamiltonian approach and the previous
approach. In \cite{lin-strauss1} \cite{lin-strauss2}, the instability and
stability criteria were obtained in very different ways. In the Hamiltonian
approach, both stability and instability information are obtained from the
computation of $n^{-}\left(  L|_{\overline{R\left(  B\right)  }}\right)  $.
Another difference lies in the treatment of the Poisson constraint. In the
Hamiltonian approach, the Poisson constraint is only imposed on the initial
data and it does not appear in the Hamiltonian formulation
(\ref{hamiltonian-separated}). Moreover, since the constraint is automatically
satisfied on the eigenspaces of nonzero eigenvalues, it does not affect the
counting of unstable modes. Thus, we can leave out the Poisson constraint
until stating the exponential trichotomy estimates for data satisfying this
constraint. We refer to Remark \ref{remark-constraint-1.5} for more details.
In \cite{lin-strauss1} \cite{lin-strauss2}, the Poisson equation is needed to
formulate a family of self-adjoint operators $\mathcal{A}^{\lambda\ }$on
electromagnetic potentials for the eigenvalue problem. But it requires some
careful choice of the gauge condition to make the Poisson equation to be
compatible with the current equation ((\ref{linearized-M1}) in $1\frac{1}{2}$D
and (\ref{eqn-maxwell-1}) in $3$D). The approach of (\cite{lin-strauss1}
\cite{lin-strauss2}) had been extended to Vlasov-Maxwell systems in a bounded
domain (\cite{nguen-strauss-arma} \cite{nguen-strauss-sima} \cite{zhang-sima}%
). It might still be possible to use the Hamiltonian formulation for models
with boundary conditions. The current Hamiltonian approach requires the
monotone dependence of steady distribution function on the particle energy. On
the other hand, the approach of (\cite{lin-strauss1} \cite{lin-strauss2}) can
be used to obtain sufficient instability conditions for non-monotonic steady
distribution function. See \cite{lin-01} \cite{guo-lin-stellar} for the
Vlasov-Poisson models, and \cite[Section 9]{lin-strauss2} \cite{ben-artz-jmp}
\cite{ben-artz-sima} \cite{ben-zrtz-survey} for the Vlasov-Maxwell models. It
would be very interesting to explore the Hamiltonian formulations for the
non-monotonic cases.

This paper is organized as follows. In Section 2, we state the results of
separable Hamiltonian systems to be used in later sections. In Sections 3, we
study the $1\frac{1}{2}$D Vlasov-Maxwell system. In Sections 4, we study the
$3$D relativistic Vlasov-Maxwell system with cylindrical symmetry.

\section{\label{section-abstract}Separable Linear Hamiltonian PDEs}

We briefly describe the results in \cite{lin-zeng-EP} about general separable
Hamiltonian PDEs (\ref{hamiltonian-separated}). The triple $\left(
L,A,B\right)  $ is assumed to satisfy assumptions:

\begin{enumerate}
\item[(\textbf{G1})] The operator $B:Y^{\ast}\supset D(B)\rightarrow X$ and
its dual operator $B^{\prime}:X^{\ast}\supset D(B^{\prime})\rightarrow Y\ $are
densely defined and closed (and thus $B^{\prime\prime}=B$).

\item[(\textbf{G2})] The operator $A:Y\rightarrow Y^{\ast}$ is bounded and
self-dual (i.e. $A^{\prime}=A$ and thus $\left\langle Au,v\right\rangle $ is a
bounded symmetric bilinear form on $Y$). Moreover, there exist $\delta>0$ such
that
\[
\langle Au,u\rangle\geq\delta\left\Vert u\right\Vert _{Y}^{2},\;\forall u\in
Y.
\]

\item[(\textbf{G3})] The operator $L:X\rightarrow X^{\ast}$ is bounded and
self-dual (i.e. $L^{\prime}=L$ \textit{etc.}) and there exists a decomposition
of $X$ into the direct sum of three closed subspaces
\begin{equation}
X=X_{-}\oplus\ker L\oplus X_{+},\ \dim\ker L<\infty,\ \ n^{-}(L)\triangleq\dim
X_{-}<\infty\label{decom-X}%
\end{equation}
satisfying

\begin{enumerate}
\item[(\textbf{G3.a})] $\left\langle Lu,u\right\rangle <0$ for all $u\in
X_{-}\backslash\{0\}$;

\item[(\textbf{G3.b})] there exists $\delta>0$ such that
\[
\left\langle Lu,u\right\rangle \geq\delta\left\Vert u\right\Vert ^{2}\ ,\text{
for any }u\in X_{+}.
\]

\end{enumerate}
\end{enumerate}

We note that the assumptions $\dim\ker L<\infty$ and $A>0\ $can be relaxed
(see \cite{lin-zeng-EP}). But these simplified assumptions are enough for the
applications to Vlasov-Maxwell systems studied in this paper.

\begin{theorem}
\label{T:abstract} \cite{lin-zeng-EP}Assume (\textbf{G1-3}) for
(\ref{hamiltonian-separated}). The operator $\mathbf{JL}$ generates a $C^{0}$
group $e^{t\mathbf{JL}}$ of bounded linear operators on $\mathbf{X}=X\times Y$
and there exists a decomposition%
\[
\mathbf{X}=E^{u}\oplus E^{c}\oplus E^{s},\quad
\]
of closed subspaces $E^{u,s,c}$ with the following properties:

i) $E^{c},E^{u},E^{s}$ are invariant under $e^{t\mathbf{JL}}$.

ii) $E^{u}\left(  E^{s}\right)  $ only consists of eigenvectors corresponding
to negative (positive) eigenvalues of $\mathbf{JL}$ and
\begin{equation}
\dim E^{u}=\dim E^{s}=n^{-}\left(  L|_{\overline{R\left(  B\right)  }}\right)
, \label{unstable-dimension-formula}%
\end{equation}
where $n^{-}\left(  L|_{\overline{R\left(  B\right)  }}\right)  $ denotes the
number of negative modes of
$\left\langle L\cdot,\cdot\right\rangle |_{\overline{R\left(  B\right)  }}$.
If $n^{-}\left(  L|_{\overline{R\left(  B\right)  }}\right)  >0$,
then there exists $M>0$ such that
\begin{equation}
\left\vert e^{t\mathbf{JL}}|_{E^{s}}\right\vert \leq Me^{-\lambda_{u}%
t},\;t\geq0;\quad\left\vert e^{t\mathbf{JL}}|_{E^{u}}\right\vert \leq
Me^{\lambda_{u}t},\;t\leq0, \label{estimate-stable-unstable}%
\end{equation}
where $\lambda_{u}=\min\{\lambda\mid\lambda\in\sigma(\mathbf{JL}|_{E^{u}%
})\}>0$.

iii) The quadratic form $\left\langle \mathbf{L}\cdot,\cdot\right\rangle
$ vanishes on $E^{u,s}$, i.e. $\langle\mathbf{L}\mathbf{u},\mathbf{u}%
\rangle=0$ for all $\mathbf{u}\in E^{u,s}$, but is non-degenerate on
$E^{u}\oplus E^{s}$, and
\begin{equation}
E^{c}=\left\{  \mathbf{u}\in\mathbf{X}\mid\left\langle \mathbf{\mathbf{L}%
u,v}\right\rangle =0,\ \forall\ \mathbf{v}\in E^{s}\oplus E^{u}\right\}  .
\label{defn-center space}%
\end{equation}
There exists $M>0$ such that
\begin{equation}
|e^{t\mathbf{J}\mathbf{L}}|_{E^{c}}|\leq M(1+t^{2}),\text{ for all }%
t\in\mathbf{R}. \label{estimate-center}%
\end{equation}

iv) Suppose $\left\langle L\cdot,\cdot\right\rangle $ is non-degenerate on
$\overline{R\left(  B\right)  }$, then $|e^{t\mathbf{JL}}|_{E^{c}}|\leq M$ for
some $M>0$. Namely, there is Lyapunov stability on the center space $E^{c}$.
\end{theorem}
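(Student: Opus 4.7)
The plan is to exploit the separable Hamiltonian structure to reduce the evolution to a second-order equation in $u$ alone, and then use the fact that $(X,\langle L\cdot,\cdot\rangle)$ is a Pontryagin space (finite negative index by (G3)) to read off the spectrum of $\mathbf{JL}$. I would first check that $\mathbf{JL}$ generates a $C^{0}$ group on $\mathbf{X}=X\times Y$: using (G1)--(G3), the conserved energy $\langle Lu,u\rangle+\langle Av,v\rangle$ together with a Lumer--Phillips estimate in an equivalent auxiliary norm (adapted so that the indefiniteness of $L$ is absorbed into a bounded perturbation) gives generation, and the Hamiltonian symmetry $\mathbf{L}\mathbf{J}=-\mathbf{J}^{*}\mathbf{L}$ forces $\sigma(\mathbf{JL})$ to be invariant under $\lambda\mapsto-\lambda$ (and under complex conjugation).

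The crucial reduction is that on smooth data
\begin{equation*}
\partial_{t}^{2}u=-BAB'Lu,
\end{equation*}
so the nonzero spectrum of $\mathbf{JL}$ is encoded in the generalised eigenvalue problem $\mu u=BAB'Lu$ with $\mu=-\lambda^{2}$, which lives on $\overline{R\left(B\right)}$ since the range of $BAB'$ is contained there and $A$ is coercive. Pairing with $Lu$ yields $\mu\langle Lu,u\rangle=\langle AB'Lu,B'Lu\rangle\geq 0$, so every negative $\mu$ forces $\langle Lu,u\rangle\leq 0$. A minimax argument for this problem in the indefinite inner product matches the number of negative $\mu$'s with $n^{-}(L|_{\overline{R\left(B\right)}})$, producing exactly that many positive real eigenvalues of $\mathbf{JL}$ counted with multiplicity. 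Defining $E^{u}, E^{s}$ as the sums of generalised eigenspaces with positive/negative real parts and then using the symmetry $\lambda\leftrightarrow -\lambda$ to pair them, one obtains (\ref{unstable-dimension-formula}) and the exponential estimates (\ref{estimate-stable-unstable}).

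For parts (iii) and (iv) I would take (\ref{defn-center space}) as the definition of $E^{c}$. Vanishing of $\langle\mathbf{L}\cdot,\cdot\rangle$ on $E^{u,s}$ follows from the eigenvalue pairing, and non-degeneracy on $E^{u}\oplus E^{s}$ yields the topological direct-sum decomposition $\mathbf{X}=E^{u}\oplus E^{c}\oplus E^{s}$ with $E^{c}$ invariant. The bound (\ref{estimate-center}) then reduces to analysing Jordan chains of $\mathbf{JL}$ on the imaginary axis: the second-order structure and the finite negative index of $L$ on $\overline{R\left(B\right)}$ constrain each such chain to length at most three, producing at most quadratic growth. When $\langle L\cdot,\cdot\rangle$ is additionally non-degenerate on $\overline{R\left(B\right)}$, the potential chain at $\lambda=0$ is forced to be trivial, and the standard Pontryagin-space argument rules out higher chains elsewhere on $i\mathbb{R}$, giving the uniform bound $|e^{t\mathbf{JL}}|_{E^{c}}|\leq M$ claimed in (iv).

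The main obstacle I anticipate is the dimension count (\ref{unstable-dimension-formula}) in full generality: real positive eigenvalues are controlled by the minimax argument sketched above, but complex eigenvalues $\lambda=\alpha+i\beta$ with $\alpha>0$ also contribute to $E^{u}$ in conjugate pairs and do not arise from any single self-adjoint spectral problem. Reconciling their count with $n^{-}(L|_{\overline{R\left(B\right)}})$ requires a careful construction of maximal $L$-non-positive invariant subspaces in the Pontryagin space $(X,\langle L\cdot,\cdot\rangle)$; this invariant-subspace theory is the technical heart of \cite{lin-zeng-EP} that I would invoke rather than re-derive.
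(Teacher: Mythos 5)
The paper itself contains no proof of Theorem \ref{T:abstract}: it is quoted from \cite{lin-zeng-EP}, and your proposal ultimately rests on the same citation, since you explicitly defer the invariant-subspace/index machinery to that reference. As a roadmap your outline (generation of the group, the second-order reduction $\partial_{t}^{2}u=-BAB^{\prime}Lu$, the energy pairing using $A>0$, defining $E^{c}$ by $\mathbf{L}$-orthogonality) is consistent with the strategy of the cited work, so there is no clash of approaches; but two of your assessments of where the difficulty lies are off, and they are worth correcting.

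First, the obstacle you single out as the ``technical heart'' --- unstable eigenvalues $\lambda=\alpha+i\beta$ with $\alpha,\beta\neq 0$ --- simply does not occur in the separable setting, and your own pairing identity already shows this. If $\lambda\neq0$ and $\lambda u=BAv$, $\lambda v=-B^{\prime}Lu$, then $\lambda^{2}\langle Lu,\bar{u}\rangle=-\langle AB^{\prime}Lu,\overline{B^{\prime}Lu}\rangle\leq0$ with both $\langle Lu,\bar{u}\rangle$ and the right-hand side real; and if $\langle Lu,\bar{u}\rangle=0$ then coercivity of $A$ gives $B^{\prime}Lu=0$, hence $v=0$ and then $u=0$. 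So every nonzero eigenvalue satisfies $\lambda^{2}\in\mathbf{R}$, i.e.\ the unstable spectrum is real (this is also why the theorem can assert that $E^{u,s}$ consist of genuine eigenvectors; semisimplicity still needs a short argument your sketch omits). The part that genuinely cannot be waved through is the lower bound in the count --- that \emph{every} negative direction of $L|_{\overline{R(B)}}$ produces an unstable mode rather than being absorbed into the imaginary axis or the kernel --- and your one-sentence ``minimax argument'' does not deliver this; that is precisely what must be imported from \cite{lin-zeng-EP}. Second, parts (iii)--(iv) are not a matter of Jordan chains: $E^{c}$ is infinite dimensional and carries essential spectrum, so the bound $(1+t^{2})$ cannot be obtained by bounding chain lengths on $i\mathbf{R}$. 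In the cited theory it comes from an $\mathbf{L}$-orthogonal splitting of $E^{c}$ into a finite-dimensional degenerate piece (the only source of polynomial growth) and an invariant complement on which $\langle\mathbf{L}\cdot,\cdot\rangle$ is uniformly positive by (G3.b) and $A>0$, giving a conserved equivalent norm there; part (iv) then follows because non-degeneracy of $L$ on $\overline{R(B)}$ eliminates the degenerate finite-dimensional piece, not merely ``a chain at $\lambda=0$''. With these two corrections your outline matches the cited proof; without them the sketch both overestimates one difficulty and underestimates the real ones.
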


\begin{remark}
Above theorem shows that the solutions of (\ref{hamiltonian-separated}) are
spectrally stable (i.e. nonexistence of exponentially growing solution) if and
only if $L|_{\overline{R\left(  B\right)  }}\geq0$. Moreover, $n^{-}\left(
L|_{\overline{R\left(  B\right)  }}\right)  $ gives the number of unstable
modes when $L|_{\overline{R\left(  B\right)  }}$ has a negative direction. The
exponential trichotomy estimates (\ref{estimate-stable-unstable}%
)-(\ref{estimate-center}) are important in the study of nonlinear dynamics
near an unstable steady state, such as the proof of nonlinear instability or
the construction of invariant (stable, unstable and center) manifolds. If the
spaces $E^{u,s}$ have higher regularity, then the exponential trichotomy can
be lifted to more regular spaces. We refer to Theorem 2.2 in
\cite{lin-zeng-hamiltonian} for more precise statements.
\end{remark}

\section{$1.5\ $D Vlasov-Maxwell systems}

In this section, we consider the stability of a class of equilibria of
$1\frac{1}{2}$D\ Vlasov-Maxwell systems by using the framework of separable
Hamiltonian systems. We largely follow the notations in (\cite{lin-strauss1}).
Here, we consider the classical (i.e. nonrelativistic) Vlasov-Maxwell system,
while in (\cite{lin-strauss1}) the relativistic Vlasov-Maxwell system was
studied. The stability criteria obtained in both cases are very similar.

The $1\frac{1}{2}$D Vlasov Maxwell system for electrons with a constant ion
background $n_{0}$ is
\[
\partial_{t}f+v_{1}\partial_{x}f-\left(  E_{1}+v_{2}B\right)  \partial_{v_{1}%
}f-\left(  E_{2}-v_{1}B\right)  \partial_{v_{2}}f=0
\]%
\[
\partial_{t}E_{1}=-j_{1}=\int v_{1}f\text{ }dv,\text{ }\partial_{t}%
B=-\partial_{x}E_{2}%
\]%
\[
\partial_{t}E_{2}+\partial_{x}B=-j_{2}=\int v_{2}f\text{ }dv
\]
with the constraint
\begin{subequations}
\[
\partial_{x}E_{1}=n_{0}-\int f\text{ }dv.
\]
We consider steady solutions of above system that are periodic in the variable
$x$ with a given period $P$. Consider the $P-$periodic equilibrium $f^{0}%
=\mu(e,p),$ $E_{1}^{0}=-\partial_{x}\phi^{0},E_{2}^{0}=0,B^{0}=\partial
_{x}\psi^{0},$ where the electromagnetic potentials $\left(  \phi^{0},\psi
^{0}\right)  $ satisfy the ODE system%
\end{subequations}
\[
\partial_{x}^{2}\phi^{0}=n_{0}-\int\mu(e,p)dv,\quad\partial_{x}^{2}\psi
^{0}=\int v_{2}\mu(e,p)dv
\]
with the electron energy and the \textquotedblleft angular momentum" defined
by
\begin{equation}
e=\frac{1}{2}\left\vert v\right\vert ^{2}-\phi^{0}(x),\quad p=v_{2}-\psi
^{0}(x). \label{defn-e-p}%
\end{equation}
We assume
\begin{equation}
\mu\geq0,\quad\mu\in C^{1},\ \ \mu_{e}\equiv\frac{\partial\mu}{\partial e}<0
\label{mu-assumption}%
\end{equation}
and, in order for $\int\left(  |\mu_{e}|+\left\vert \mu_{p}\right\vert
\right)  dv$ to be finite,
\begin{equation}
\left(  |\mu_{e}|+\left\vert \mu_{p}\right\vert \right)  \left(  e,p\right)
\leq c(1+\left\vert e\right\vert )^{-\frac{\alpha}{2}}\text{ for some }%
\alpha>2. \label{mu-assumption1}%
\end{equation}
The linearized Vlasov equation is
\begin{equation}
(\partial_{t}+D)f=\mu_{e}v_{1}E_{1}-\mu_{p}v_{1}B+(\mu_{e}v_{2}+\mu_{p})E_{2},
\label{linearized-V}%
\end{equation}
where $D$ is the transport operator associated with the steady fields, that
is,
\begin{align}
D  &  =v_{1}\partial_{x}-\left(  E_{1}^{0}+v_{2}B^{0}\right)  \partial_{v_{1}%
}+v_{1}B^{0}\partial_{v_{2}}\label{transport-operator}\\
&  =v_{1}\partial_{x}+\partial_{x}\phi^{0}\ \partial_{v_{1}}+\partial_{x}%
\psi^{0}\ (v_{1}\partial_{v_{2}}-v_{2}\partial_{v_{1}}).\nonumber
\end{align}
The linearized Maxwell equations become%
\begin{equation}
\partial_{t}E_{1}=\int v_{1}fdv, \label{linearized-M1}%
\end{equation}%
\begin{equation}
\ \partial_{t}E_{2}+\partial_{x}B=\int v_{2}fdv, \label{linearized-M2}%
\end{equation}%
\begin{equation}
\ \partial_{t}B+\partial_{x}E_{2}=0. \label{linearized-M3}%
\end{equation}
with the constraint
\begin{equation}
\partial_{x}E_{1}=-\int fdv. \label{constraint-E}%
\end{equation}
We consider the initial data satisfying the constraint $\int B\left(
0,x\right)  \ dx=0$. Then by (\ref{linearized-M3}), $\int B\left(  t,x\right)
\ dx=0$ for all $t\in\mathbf{R}$. Let $\psi\left(  t,x\right)  $ be the
magnetic potential function satisfying
\begin{equation}
\psi_{x}=B,\ \ \int_{0}^{P}\psi\left(  t,x\right)  dx=-\int_{0}^{t}\int%
_{0}^{P}E_{2}\left(  s,x\right)  dxdt. \label{defn-mag-potential}%
\end{equation}
Then by (\ref{linearized-M3}), $\psi_{t}=-E_{2}$. Below, we write the
linearized equations (\ref{linearized-V}) and (\ref{linearized-M1}%
)-(\ref{linearized-M3}) as a separable Hamiltonian system
(\ref{hamiltonian-separated}). We split $f$ into its even and odd parts in the
variable $v_{1}$:
\[
f=f_{ev}+f_{od},\quad\text{where }f_{ev}(x,v_{1},v_{2})=\tfrac{1}%
{2}\{f(x,v_{1},v_{2})+f(x,-v_{1},v_{2})\}.
\]
and define $g_{ev}=$ $f_{ev}+\mu_{p}\psi$. The operator $D$ takes even
functions into odd ones, and vice versa. So from (\ref{linearized-V}), we
have
\begin{align}
\partial_{t}f_{od}  &  =-Df_{ev}+(E_{1}+v_{2}B)\partial_{v_{1}}f^{0}%
-v_{1}B\partial_{v_{2}}f^{0}\label{eqn-f-od}\\
&  =-Df_{ev}+\mu_{e}v_{1}E_{1}-\mu_{p}v_{1}\partial_{x}\psi=-Dg_{ev}+\mu
_{e}v_{1}E_{1},\nonumber
\end{align}
and
\[
\partial_{t}f_{ev}+Df_{od}=E_{2}\partial_{v_{2}}f^{0}=\mu_{e}v_{2}E_{2}%
-\mu_{p}\partial_{t}\psi,
\]
which yields
\begin{equation}
\partial_{t}g_{ev}=-Df_{od}+\mu_{e}v_{2}E_{2}. \label{eqn-g-ev}%
\end{equation}
The Maxwell equations (\ref{linearized-M1})-(\ref{linearized-M3}) become
\begin{equation}
\partial_{t}E_{1}=\int v_{1}f_{od}dv, \label{eqn-E1}%
\end{equation}%
\begin{equation}
\partial_{t}E_{2}=-\partial_{xx}\psi-\int\mu_{p}v_{2}\psi+\int v_{2}g_{ev}dv,
\label{eqn-E2}%
\end{equation}%
\begin{equation}
\partial_{t}\psi=-E_{2}. \label{eqn-psi}%
\end{equation}
Define
\[
X_{od}=\left\{  f\in L_{\frac{1}{|\mu_{e}|}}^{2}\ |\ f(x,-v_{1},v_{2}%
)=-f(x,v_{1},v_{2})\text{ }\right\}
\]
and%

\[
X_{ev}=\left\{  f\in L_{\frac{1}{|\mu_{e}|}}^{2}\ |\ f(x,-v_{1},v_{2}%
)=f(x,v_{1},v_{2})\text{ }\right\}  .
\]
Let $L_{P}^{2},H_{P}^{1}$ be the $x-$periodic functions in $L^{2}$ and $H^{1}%
$, and define $X=X_{ev}\times L_{P}^{2}\times H_{P}^{1}$. Define the operators
$L:X\rightarrow X^{\ast}$ by
\begin{equation}
L\left(
\begin{array}
[c]{c}%
g_{ev}\\
E_{1}\\
\psi
\end{array}
\right)  =\left(
\begin{array}
[c]{ccc}%
-\frac{1}{\mu_{e}} & 0 & 0\\
0 & I & 0\\
0 & 0 & L_{0}%
\end{array}
\right)  \left(
\begin{array}
[c]{c}%
g_{ev}\\
E_{1}\\
\psi
\end{array}
\right)  ,\ \ \label{operator-L}%
\end{equation}
where $L_{0}=-\frac{d^{2}}{dx^{2}}-\int\mu_{p}v_{2}dv$. Let $Y=X_{od}\times
L_{P}^{2}$ and define the operator $A:Y\rightarrow Y^{\ast}$ by
\begin{equation}
A=\left(
\begin{array}
[c]{cc}%
-\frac{1}{\mu_{e}} & 0\\
0 & I
\end{array}
\right)  . \label{operator-A}%
\end{equation}
Note that $A:Y\rightarrow Y^{\ast}$ is an isometry. Define $B:Y^{\ast}\supset
D(B)\rightarrow X$ by
\begin{equation}
B=\left(
\begin{array}
[c]{cc}%
\mu_{e}D & \mu_{e}v_{2}\\
-\int\mu_{e}v_{1}\cdot dv & 0\\
0 & -I
\end{array}
\right)  \label{operator B}%
\end{equation}
and the corresponding dual operator $B^{\prime}:X^{\ast}\supset D(B^{\prime
})\rightarrow Y$ is
\[
B^{\prime}=\left(
\begin{array}
[c]{ccc}%
-\mu_{e}D & -\mu_{e}v_{1} & 0\\
\int\mu_{e}v_{2}\cdot dv & 0 & -I
\end{array}
\right)  .
\]
Let
\[
u=\left(
\begin{array}
[c]{c}%
g_{ev}\\
E_{1}\\
\psi
\end{array}
\right)  \in X,\ \ v=\left(
\begin{array}
[c]{c}%
f_{od}\\
E_{2}%
\end{array}
\right)  \in Y.
\]
Then the linearized $1\frac{1}{2}$D Vlasov-Maxwell system (\ref{eqn-f-od}%
)-(\ref{eqn-psi}) can be written as a separable Hamiltonian form
(\ref{hamiltonian-separated}) with $\left\langle L,A,B\right\rangle $ defined
in (\ref{operator-L})-(\ref{operator B}). Now we check that the triple
$\left\langle L,A,B\right\rangle $ satisfies assumptions (\textbf{G1-3}) in
Section \ref{section-abstract}. Assumptions (\textbf{G1-2}) are obvious. To
verify (\textbf{G3}), we note that for any $\left(  g_{ev},E_{1},\psi\right)
\in X$,
\begin{align}
&  \left\langle L\left(
\begin{array}
[c]{c}%
g_{ev}\\
E_{1}\\
\psi
\end{array}
\right)  ,\left(
\begin{array}
[c]{c}%
g_{ev}\\
E_{1}\\
\psi
\end{array}
\right)  \right\rangle \label{quadratic_L}\\
&  =\iint\frac{1}{|\mu_{e}|}\left\vert g_{ev}\right\vert ^{2}dvdx+\int%
\left\vert E_{1}\right\vert ^{2}dx+\int\left\vert \psi^{\prime}\right\vert
^{2}dx-\int\int\mu_{p}v_{2}\left\vert \psi\right\vert ^{2}dxdv.\nonumber
\end{align}
Then assumption (\textbf{G3}) follows since the operator $L_{0}=-\frac{d^{2}%
}{dx^{2}}-\int\mu_{p}v_{2}dv$ has finite-dimensional negative and zero
eigenspaces. To apply Theorem \ref{T:abstract} to study the solutions of
(\ref{eqn-f-od})-(\ref{eqn-psi}), we need to compute $n^{-}\left(
L|_{\overline{R\left(  B\right)  }}\right)  $. First, we introduce some
notations as in (\cite{lin-strauss1}). Define the following operators,
$\mathcal{A}_{1}^{0},\mathcal{A}_{2}^{0},\mathcal{L}^{0}\ $act from $H_{P}%
^{2}$ to $L_{P}^{2}$ and $\mathcal{B}^{0},\left(  \mathcal{B}^{0}\right)
^{\ast}$ act from $L_{P}^{2}$ to $L_{P}^{2}$
\[
\mathcal{A}_{1}^{0}h=-\partial_{x}^{2}h-\left(  \int\mu_{e}dv\right)
h+\int\mu_{e}\ \mathcal{P}h\ dv,
\]%
\[
\mathcal{A}_{2}^{0}h=-\partial_{x}^{2}h-\left(  \int v_{2}\mu_{p}dv\right)
h-\int\mu_{e}v_{2}\mathcal{P}(\hat{v}_{2}h)\ dv,
\]%
\begin{align*}
\mathcal{B}^{0}h  &  =\left(  \int\mu_{p}dv\right)  h+\int\mu_{e}%
\ \mathcal{P}(v_{2}h)\ dv\\
&  =-\int\mu_{e}\left(  I-\mathcal{P}\right)  \left(  v_{2}h\right)  dv,\\
\left(  \mathcal{B}^{0}\right)  ^{\ast}h  &  =\left(  \int\mu_{p}dv\right)
h+\int v_{2}\mu_{e}\mathcal{P}(h)\ dv
\end{align*}
and
\begin{equation}
\mathcal{L}^{0}=(\mathcal{B}^{0})^{\ast}(\mathcal{A}_{1}^{0})^{-1}%
\mathcal{B}^{0}+\mathcal{A}_{2}^{0}, \label{defn-cal-L0-1.5}%
\end{equation}
where $\mathcal{P}$ is the projection operator of $L_{\left\vert \mu
_{e}\right\vert }^{2}$ onto $\ker D$. Then we have

\begin{lemma}
\label{lemma-dimension-1.5d}
\[
n^{-}\left(  L|_{\overline{R\left(  B\right)  }}\right)  =n^{-}\left(
\mathcal{L}^{0}\right)  ,\ \dim\ker L|_{\overline{R\left(  B\right)  }}%
=\dim\ker\mathcal{L}^{0}.
\]

\end{lemma}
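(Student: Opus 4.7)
The plan is to give an explicit characterization of $\overline{R(B)}$ via duality and then to realize $\langle L\cdot,\cdot\rangle|_{\overline{R(B)}}$ as a Schur complement whose reduction to the magnetic potential $\psi$ coincides with $\langle\mathcal{L}^{0}\cdot,\cdot\rangle$.

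The first step is to compute $\ker B'$: the defining equations are $Dh+v_{1}E_{1}^{\phi}=0$ and $\eta=\int\mu_{e}v_{2}h\,dv$, and since $D\phi(x)=v_{1}\phi'(x)$ for any $x$-dependent $\phi$, a particular solution is $h=\phi$, $E_{1}^{\phi}=-\phi'$, with the general solution adding an element of $\ker D\cap X_{ev}^{*}$. Annihilating in $X$, I expect that $(g_{ev},E_{1},\psi)\in\overline{R(B)}$ is characterized by two conditions: (I) the Poisson-type relation $\partial_{x}E_{1}+\int g_{ev}\,dv-\psi\int\mu_{p}\,dv=0$, obtained from orthogonality against the $\phi$-direction together with the identity $\int\mu_{e}v_{2}\,dv=-\int\mu_{p}\,dv$ that follows from $\int\partial_{v_{2}}\mu\,dv=0$; and (II) the transport constraint $\mathcal{P}(g_{ev}/\mu_{e}+v_{2}\psi)=0$ in $L^{2}_{|\mu_{e}|}$, obtained from orthogonality against $\ker D$.

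I would then introduce the potential $\phi\in H_{P}^{1}$ via $E_{1}=-\phi'$ and parametrize $\overline{R(B)}$ by $(\psi,w)$ with $w\in(\ker D)^{\perp}$ via $g_{ev}/\mu_{e}=-\mathcal{P}(v_{2}\psi)+w$. The Pythagoras identity in $L^{2}_{|\mu_{e}|}$ splits the kinetic integral as $\int g_{ev}^{2}/|\mu_{e}|\,dvdx=\int|\mu_{e}|\mathcal{P}(v_{2}\psi)^{2}\,dvdx+\int|\mu_{e}|w^{2}\,dvdx$. Fixing $\psi$ and applying Lagrange multipliers for (I) and (II), the stationarity equations in $(w,\phi)$ collapse to $w_{\mathrm{opt}}=-(I-\mathcal{P})\phi$ together with $\mathcal{A}_{1}^{0}\phi=\mathcal{B}^{0}\psi$, so $\phi=(\mathcal{A}_{1}^{0})^{-1}\mathcal{B}^{0}\psi$. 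Substituting back and using the identities $\int\phi\,\mathcal{A}_{1}^{0}\phi\,dx=\int(\phi')^{2}dx+\int|\mu_{e}|((I-\mathcal{P})\phi)^{2}\,dvdx$ and $\langle\mathcal{A}_{2}^{0}\psi,\psi\rangle=\int|\mu_{e}|\mathcal{P}(v_{2}\psi)^{2}\,dvdx+\int(\psi')^{2}dx-\int\psi^{2}\int\mu_{p}v_{2}\,dv\,dx$, one arrives at $\min_{w}\langle Lu,u\rangle=\langle\mathcal{A}_{2}^{0}\psi,\psi\rangle+\langle(\mathcal{B}^{0})^{*}(\mathcal{A}_{1}^{0})^{-1}\mathcal{B}^{0}\psi,\psi\rangle=\langle\mathcal{L}^{0}\psi,\psi\rangle$.

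Completing the square around $w_{\mathrm{opt}}(\psi)$ decouples the quadratic form as $\langle Lu,u\rangle=\langle\mathcal{L}^{0}\psi,\psi\rangle+Q_{+}(w-w_{\mathrm{opt}}(\psi))$, where $Q_{+}$ is coercive since it dominates $\int|\mu_{e}|(w-w_{\mathrm{opt}})^{2}\,dvdx$. The standard Schur/Sylvester counting then yields $n^{-}(L|_{\overline{R(B)}})=n^{-}(\mathcal{L}^{0})$ and $\dim\ker L|_{\overline{R(B)}}=\dim\ker\mathcal{L}^{0}$. I expect the main technical obstacle to lie in the first step: identifying $\ker B'$ and $\overline{R(B)}$ requires careful bookkeeping of the dual pairings and the weights associated with $\mu_{e}<0$, and verifying that the Lagrange stationarity equations reproduce the precise formulas for $\mathcal{A}_{1}^{0},\mathcal{A}_{2}^{0}$, and $\mathcal{B}^{0}$ as defined in the paper.
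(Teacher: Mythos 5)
Your overall route is different from the paper's and is viable: you propose to identify $\overline{R(B)}$ outright as the pre-annihilator of $\ker B'$, cut out by the Poisson-type condition (I) and the projection condition (II) (these do match the paper's conditions (\ref{range-2}) and (\ref{range-1}), via $\int(\mu_e v_2+\mu_p)\,dv=0$), and then block-diagonalize $\langle L\cdot,\cdot\rangle$ on that set by completing the square in the kinetic variable, so that both $n^-$ and the kernel dimension are read off from a single Sylvester-type decomposition $\langle Lu,u\rangle=\langle\mathcal{L}^0\psi,\psi\rangle+Q_+(w-w_{\mathrm{opt}}(\psi))$. The paper instead proves the two inequalities $n^{\leq0}(L|_{\overline{R(B)}})\lessgtr n^{\leq0}(\mathcal{L}^0)$ separately — the upper bound by quoting the Lin--Strauss estimate $W(f_{od},E_2)\geq(\mathcal{L}^0E_2,E_2)$, the lower bound by explicitly constructing $u^\psi$ and an approximating sequence $h^n_{od}$ in $R(B)$ — and then matches kernel dimensions by a separate computation with $B'L$. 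Your Schur-complement bookkeeping (the identities for $\langle\mathcal{A}_1^0\phi,\phi\rangle$ and $\langle\mathcal{A}_2^0\psi,\psi\rangle$, the minimizer $w_{\mathrm{opt}}$, $\mathcal{A}_1^0\phi=\mathcal{B}^0\psi$) is correct up to sign conventions and reproduces the paper's minimizing element; if the characterization of $\overline{R(B)}$ were in hand, your argument would indeed give both equalities at once, which is arguably cleaner.

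The genuine gap is exactly at the step you flag but misdiagnose as mere bookkeeping: the identification of $\ker B'$. The kernel equation is $Dh+v_1F=0$ with $F\in L^2_P$; writing $F=-\phi'+k$ with $k$ the mean of $F$, your claim that the general solution is $h=\phi+(\ker D)$, $F=-\phi'$, requires ruling out $k\neq0$, i.e. proving $v_1\notin R(D)$ (the paper proves the stronger $v_1\notin\overline{R(D)}$). This is not a formality: the paper devotes a construction to it, producing an odd-in-$v_1$ function $\chi\in\ker D$ supported on untrapped trajectories with $\iint\mu_e v_1\chi\,dx\,dv\neq0$; the same fact is what kills the constant $k$ in the kernel identification there. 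If this point fails or is left unproven, $\ker B'$ could be strictly larger than your list, hence $\overline{R(B)}$ strictly smaller than the set defined by (I)--(II); then your reduction only yields the one-sided bound $n^-(L|_{\overline{R(B)}})\leq n^-(\mathcal{L}^0)$, and the sufficiency direction you need to realize the negative and kernel directions of $\mathcal{L}^0$ inside $\overline{R(B)}$ (the analogue of the paper's construction of $u^\psi\in\overline{R(B)}$) collapses. Two lesser points to repair: your parametrization $E_1=-\phi'$ silently discards the mean value of $E_1$, which by your own annihilator conditions does belong to $\overline{R(B)}$; it decouples as a strictly positive direction, so the counts are unaffected, but this must be said. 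And $\mathcal{A}_1^0$ annihilates constants, so $\phi=(\mathcal{A}_1^0)^{-1}\mathcal{B}^0\psi$ needs the observation that $\mathcal{B}^0\psi$ has zero mean (as in Lemma 2.8 of Lin--Strauss) for the Schur complement to be well defined.
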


\begin{proof}
First, for any $0\neq u=\left(  g_{ev},E_{1},\psi\right)  \in X$ with
$\left\langle Lu,u\right\rangle \leq0$, it is easy to see from
(\ref{quadratic_L}) that $\psi\neq0$. For any $u=\left(  g_{ev},E_{1}%
,\psi\right)  \in R\left(  B\right)  =R\left(  BA\right)  $, let $u=BAv$ where
$v=\left(  f_{od},E_{2}\right)  \in Y$. Then
\[
g_{ev}=-Df_{od}+\mu_{e}v_{2}E_{2},\ \ E_{1}=\int v_{1}f_{od}dv,\ \psi
=-E_{2}\text{. }%
\]
Thus
\begin{align*}
\left\langle Lu,u\right\rangle  &  =\iint\frac{1}{|\mu_{e}|}\left\vert
Df_{od}-\mu_{e}v_{2}E_{2}\right\vert ^{2}dvdx+\int\left\vert \partial_{x}%
E_{2}\right\vert ^{2}dx\\
&  +\int\left\vert \int v_{1}f_{od}dv\right\vert ^{2}dx-\iint\mu_{p}%
v_{2}\left\vert E_{2}\right\vert ^{2}dvdx\\
&  :=W\left(  f_{od},E_{2}\right)  .
\end{align*}
It was shown in (\cite[P. 751-752]{lin-strauss1}) that $W\left(  f_{od}%
,E_{2}\right)  \geq\left(  \mathcal{L}^{0}E_{2},E_{2}\right)  $. Therefore,
$\left\langle Lu,u\right\rangle \geq\left(  \mathcal{L}^{0}\psi,\psi\right)  $
for any $u\in R\left(  B\right)  $, and also for any $u\in$ $\overline
{R\left(  B\right)  }$ by the density argument. Thus, $n^{\leq0}\left(
L|_{\overline{R\left(  B\right)  }}\right)  \leq n^{\leq0}\left(
\mathcal{L}^{0}\right)  $, where $n^{\leq0}\left(  L|_{\overline{R\left(
B\right)  }}\right)  $ and $n^{\leq0}\left(  \mathcal{L}^{0}\right)  $ denote
the maximal dimensions of subspaces where the quadratic forms $\langle
L\cdot,\cdot\rangle|_{\overline{R\left(  B\right)  }}$ and $(\mathcal{L}%
^{0}\cdot,\cdot)$ are nonpositive.

Next we show that $n^{\leq0}\left(  L|_{\overline{R\left(  B\right)  }%
}\right)  \geq n^{\leq0}\left(  \mathcal{L}^{0}\right)  $, which then implies
that $n^{\leq0}\left(  L|_{\overline{R\left(  B\right)  }}\right)  =n^{\leq
0}\left(  \mathcal{L}^{0}\right)  $. For any $\psi\in H_{P}^{1}$, define
\begin{equation}
\phi^{\psi}=-\left(  \mathcal{A}_{1}^{0}\right)  ^{-1}\mathcal{B}^{0}%
\psi,\ \ f^{\psi}=\mu_{p}\psi-\mu_{e}\phi^{\psi}+\mu_{e}\mathcal{P}(v_{2}%
\psi+\phi^{\psi}). \label{defn-E-psi}%
\end{equation}
Then by the definition of $\phi^{\psi}$
\[
\frac{d^{2}}{dx^{2}}\phi^{\psi}\left(  x\right)  =\int f^{\psi}dv.
\]
Let
\begin{equation}
E_{1}^{\psi}=\frac{d}{dx}\phi^{\psi}\left(  x\right)  ,\ \ g_{ev}^{\psi
}=-f^{\psi}+\mu_{p}\psi. \label{defn-g-psi}%
\end{equation}
We show that $u^{\psi}=\left(  g_{ev}^{\psi},\ E_{1}^{\psi},\psi\right)  \in$
$\overline{R\left(  B\right)  }$. Indeed, since $g_{ev}^{\psi}\in X_{ev}$ and
\[
g_{ev}^{\psi}+\mu_{e}v_{2}\psi=\mu_{e}\left(  I-\mathcal{P}\right)  (v_{2}%
\psi+\phi^{\psi})\in\overline{R\left(  D\right)  },
\]
there exists a sequence $\left\{  h_{od}^{n}\right\}  \in X_{od}\cap
Dom\left(  D\right)  $ such that
\[
\left\Vert -Dh_{od}^{n}-\left(  g_{ev}^{\psi}+\mu_{e}v_{2}\psi\right)
\right\Vert _{L_{\frac{1}{|\mu_{e}|}}^{2}}\rightarrow0\text{, when
}n\rightarrow\infty\text{. }%
\]
We can choose $h_{od}^{n}$ such that
\begin{equation}
\int\int v_{1}h_{od}^{n}dxdv=0. \label{condition-zero-mean-f-od}%
\end{equation}
To show this, we claim that there exists an odd (in $v_{1}$) function $\chi
\in\ker D$ such that $\int\int v_{1}\chi dxdv\neq0$. Therefore, we can adjust
$h_{od}^{n}$ by $c\chi$ to ensure (\ref{condition-zero-mean-f-od}). Indeed, a
function $\chi\in\ker D$ if and only if it takes constant values on each
particle trajectory $\left(  X\left(  t\right)  ,V_{1}\left(  t\right)
,V_{1}\left(  t\right)  \right)  \ $in the steady electromagnetic fields
\[
\left(  E_{1}^{0},E_{2}^{0},B^{0}\right)  =\left(  -\partial_{x}\phi
^{0},0,\partial_{x}\psi^{0}\right)  ,
\]
that is,
\[
\dot{X}\left(  t\right)  =V_{1},\ \dot{V}_{1}=-\left(  E_{1}^{0}\left(
X\right)  +V_{2}B^{0}\left(  X\right)  \right)  ,\ \dot{V}_{2}=V_{1}%
B^{0}\left(  X\right)  .
\]
In particular, $\chi$ can take opposite constants on two untrapped particle
trajectories with the same particle energy $e$ and momentum $p$ (defined in
\ref{defn-e-p})) satisfying
\[
e>\max\left[  \left(  p+\psi^{0}\right)  ^{2}-\phi^{0}\left(  x\right)
\right]
\]
but with different sign of $v_{1}$. By choosing $\chi\in\ker D$ to be zero on
the trapped region, take nonnegative values on the untrapped trajectory with
positive $v_{1}$ and opposite values on the other untrapped trajectory with
negative $v_{1}$, we can ensure that $\int\int\mu_{e}v_{1}\chi\ dxdv<0$. We
note that this also implies that $v_{1}\notin\overline{R\left(  D\right)
}=\left(  \ker D\right)  ^{\perp}$.

Let
\[
E_{1}^{n}=\int v_{1}h_{od}^{n}dv,\ g_{ev}^{n}=-Dh_{od}^{n}-\mu_{e}v_{2}\psi,
\]
then
\[
u_{n}=\left(  g_{ev}^{n},E_{1}^{n},\psi\right)  =BA\left(
\begin{array}
[c]{c}%
h_{od}^{n}\\
-\psi
\end{array}
\right)  \in R\left(  B\right)  .
\]
Moreover, the property (\ref{condition-zero-mean-f-od}) implies that
$E_{1}^{n}=\frac{d}{dx}\phi^{n}$, where
\[
\frac{d^{2}}{dx^{2}}\phi^{n}=-\int\left(  g_{ev}^{n}+\mu_{e}v_{2}\psi\right)
dv=\frac{d}{dx}\int v_{1}h_{od}^{n}dv.
\]
Since
\[
\frac{d^{2}}{dx^{2}}\phi^{\psi}=\int f^{\psi}dv=-\int\left(  g_{ev}^{\psi}%
-\mu_{p}\psi\right)  dv=-\int\left(  g_{ev}^{\psi}+\mu_{e}v_{2}\psi\right)
dv
\]
and $\left\Vert g_{ev}^{n}-g_{ev}^{\psi}\right\Vert _{L_{\frac{1}{|\mu_{e}|}%
}^{2}}\rightarrow0$, thus $\left\Vert E_{1}^{n}-E_{1}^{\psi}\right\Vert
_{L^{2}}\rightarrow0$ when $n\rightarrow\infty$. This shows that $\left\Vert
u^{\psi}-u^{n}\right\Vert _{X}\rightarrow0$ and $u^{\psi}\in\overline{R\left(
B\right)  }$. As shown in the proof of Lemma 2.8 in \cite{lin-strauss1}, we
have
\[
\left(  \mathcal{L}^{0}E_{2},E_{2}\right)  =L\left(  u^{\psi},u^{\psi}\right)
.
\]
Thus $n^{\leq0}\left(  \mathcal{L}^{0}\right)  \leq n^{\leq0}\left(
L|_{\overline{R\left(  B\right)  }}\right)  $ which implies $n^{\leq0}\left(
\mathcal{L}^{0}\right)  =n^{\leq0}\left(  L|_{\overline{R\left(  B\right)  }%
}\right)  $. To show that $n^{-}\left(  L|_{\overline{R\left(  B\right)  }%
}\right)  =n^{-}\left(  \mathcal{L}^{0}\right)  $, it remains to show that
\begin{equation}
\dim\ker L|_{\overline{R\left(  B\right)  }}=\dim\ker\mathcal{L}^{0}.
\label{dim-equal}%
\end{equation}
We note that $u\in\ker L|_{\overline{R\left(  B\right)  }}$ is equivalent to
$u=\left(  g_{ev},E_{1},\psi\right)  \in\overline{R\left(  B\right)  }\cap
\ker\left(  B^{\prime}L\right)  $. So
\begin{equation}
Dg_{ev}-\mu_{e}v_{1}E_{1}=0 \label{eqn-ker-g}%
\end{equation}
and
\begin{equation}
L_{0}\psi+\int v_{2}g_{ev}dv=0. \label{eqn-ker-psi}%
\end{equation}
Since $u\in\overline{R\left(  B\right)  }$, we have
\begin{equation}
\mathcal{P}\left(  g_{ev}+\mu_{e}v_{2}\psi\right)  =0,\ \ \label{range-1}%
\end{equation}%
\begin{equation}
\frac{d}{dx}E_{1}=-\int\left(  g_{ev}+\mu_{e}v_{2}\psi\right)  dv.
\label{range-2}%
\end{equation}
Let $\phi$ be such that
\begin{equation}
\phi_{xx}=-\int\left(  g_{ev}+\mu_{e}v_{2}\psi\right)  dv. \label{eqn-Poisson}%
\end{equation}
Then $E_{1}=\phi_{x}+k$ where $k=\frac{1}{P}\int_{0}^{P}E_{1}dx$. By
(\ref{eqn-ker-g}), we have $D\left(  g_{ev}-\mu_{e}\phi\right)  =k\mu_{e}%
v_{1}$ which implies that $k=0$ since $\mu_{e}v_{1}\notin\overline{R\left(
D\right)  }$. Thus $D\left(  g_{ev}-\mu_{e}\phi\right)  =0$, that is, $\left(
I-\mathcal{P}\right)  \left(  g_{ev}-\mu_{e}\phi\right)  =0$. Combining with
(\ref{range-1}), we get
\begin{equation}
g_{ev}=\mu_{e}\phi-\mu_{e}\mathcal{P}\left(  v_{2}\psi\right)  -\mu
_{e}\mathcal{P}\phi. \label{g-formula}%
\end{equation}
Plugging above into (\ref{eqn-Poisson}), we get $\mathcal{A}_{1}^{0}%
\phi=\mathcal{B}^{0}\psi$ and $\phi=\left(  \mathcal{A}_{1}^{0}\right)
^{-1}\mathcal{B}^{0}\psi$. Then by combining with (\ref{eqn-ker-psi}) and
(\ref{g-formula}), it yields $\mathcal{L}^{0}\psi=0$. On the other hand, if
$\mathcal{L}^{0}\psi=0$, define $E_{1}^{\psi}$ and $g_{ev}^{\psi}$ as in
(\ref{defn-E-psi}) and (\ref{defn-g-psi}). Then $\left(  g_{ev}^{\psi}%
,E_{1}^{\psi},\psi\right)  \in\overline{R\left(  B\right)  }$. By reversing
the above computation, it can be checked that (\ref{eqn-ker-g}) and
(\ref{eqn-ker-psi}) are satisfied. This shows that $\left(  g_{ev}^{\psi
},E_{1}^{\psi},\psi\right)  \in\ker\left(  L|_{\overline{R\left(  B\right)  }%
}\right)  $ . Thus $\ker\left(  L|_{\overline{R\left(  B\right)  }}\right)  $
and $\ker\mathcal{L}^{0}$ have the same dimension. This finishes the proof of
the lemma.
\end{proof}

\begin{remark}
\label{remark-constraint-1.5}We make some comments on the constraint
(\ref{constraint-E}) which becomes
\begin{equation}
\partial_{x}E_{1}=-\int\left(  g_{ev}-\mu_{p}\psi\right)  .
\label{constraint-E-ev}%
\end{equation}
This constraint is preserved by the system (\ref{eqn-f-od})-(\ref{eqn-psi}) in
the sense that
\[
\partial_{t}\left(  \partial_{x}E_{1}+\int\left(  g_{ev}-\mu_{p}\psi\right)
\right)  =0.
\]
In particular, this implies that for any nonzero eigenvalue $\lambda\ $of
(\ref{eqn-f-od})-(\ref{eqn-psi}), the constraint (\ref{constraint-E-ev}) is
satisfied on the corresponding eigenspace. Therefore, the same dimension
formula (\ref{unstable-dimension-formula}) is true under the constraint
(\ref{constraint-E-ev}). The exponential trichotomy estimates
(\ref{estimate-stable-unstable})-(\ref{estimate-center}) remain the same by
restricting to initial data satisfying the constraint (\ref{constraint-E-ev}).
The same remark applies to the constraint $\int_{0}^{P}B\left(  x,t\right)
dx=0$.
\end{remark}

We can apply Theorem \ref{T:abstract} to the linearized system (\ref{eqn-f-od}%
)-(\ref{eqn-psi}) with initial data satisfying the constraints $\int_{0}%
^{P}B\left(  x,0\right)  dx=0$ and (\ref{constraint-E-ev}). To be more
convenient for potential applications to nonlinear problems, we state the
results without the even and odd splitting of $f$. Let $\psi\left(
x,t\right)  $ be the magnetic potential defined in (\ref{defn-mag-potential})
and define $g=f+\mu_{p}\psi$. Then $g$ satisfies the equation
\begin{equation}
g_{t}=-Dg+\mu_{e}v_{1}E_{1}+\mu_{e}v_{2}E_{2}\label{eqn-g-1.5}%
\end{equation}
by (\ref{eqn-f-od}) and (\ref{eqn-g-ev}). The Maxwell system becomes
\[
\partial_{t}E_{1}=\int v_{1}g\ dv,
\]%
\[
\partial_{t}E_{2}=-\partial_{xx}\psi-\int\mu_{p}v_{2}\psi+\int v_{2}gdv,
\]%
\[
\partial_{t}\psi=-E_{2},
\]
with the constraint
\begin{equation}
\partial_{x}E_{1}=-\int\left(  g-\mu_{p}\psi\right)  .\label{constraint-E-g}%
\end{equation}

\begin{theorem}
\label{T:1.5D VM}Consider the above equivalent linearized Vlasov-Maxwell
systems for$\ \left(  g,E_{1},E_{2},\psi\right)  $ in the space
\[
\mathbf{Z}=L_{\frac{1}{|\mu_{e}|}}^{2}\times L_{P}^{2}\times L_{P}^{2}\times
H_{P}^{1},
\]
with initial data satisfying the constraint (\ref{constraint-E-g}). Then

i) The solution mapping is strongly continuous in the space $\mathbf{Z}$ and
there exists a decomposition%
\[
\mathbf{Z}=E^{u}\oplus E^{c}\oplus E^{s},\quad
\]
of closed subspaces $E^{u,s,c}$ with the following properties:

i) $E^{c},E^{u},E^{s}$ are invariant under the linearized system.

ii) $E^{u}\left(  E^{s}\right)  $ only consists of eigenvectors corresponding
to negative (positive) eigenvalues of the linearized system and
\[
\dim E^{u}=\dim E^{s}=n^{-}\left(  \mathcal{L}^{0}\right)  ,
\]
where $\mathcal{L}^{0}$ is defined in (\ref{defn-cal-L0-1.5}). In particular,
$\mathcal{L}^{0}\geq0$ implies spectral stability.

iii) The exponential trichotomy is true in the space $Z$ in the sense of
(\ref{estimate-stable-unstable})-(\ref{estimate-center}). Moreover, if
$\ker\mathcal{L}=\left\{  0\right\}  $, then Liapunov stability is true under
the norm $\left\Vert {}\right\Vert _{Z}\ $on the center space $E^{c}$.
\end{theorem}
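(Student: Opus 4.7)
The plan is to apply Theorem \ref{T:abstract} directly to the separable Hamiltonian formulation already constructed for $(g_{ev}, E_1, \psi, f_{od}, E_2)$ via the triple $(L,A,B)$ in (\ref{operator-L})--(\ref{operator B}), and then transfer the conclusions back to the original variables $(g, E_1, E_2, \psi)$ by undoing the even/odd splitting. Since assumptions (\textbf{G1--3}) were verified above, Theorem \ref{T:abstract} produces, on $\mathbf{X} = X \times Y$, a $C^0$ group $e^{t\mathbf{JL}}$ and a decomposition $\mathbf{X} = \tilde{E}^u \oplus \tilde{E}^c \oplus \tilde{E}^s$ with all the invariance, spectral, and trichotomy properties listed there. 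Combining with Lemma \ref{lemma-dimension-1.5d} gives $\dim \tilde{E}^u = \dim \tilde{E}^s = n^-(L|_{\overline{R(B)}}) = n^-(\mathcal{L}^0)$.

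Next I would introduce the parity splitting map
\[
\Phi:(g,E_1,E_2,\psi)\longmapsto (g_{ev},E_1,\psi,f_{od},E_2),
\]
where $g_{ev}=\tfrac12(g(x,v_1,v_2)+g(x,-v_1,v_2))$ and $f_{od}=\tfrac12(g(x,v_1,v_2)-g(x,-v_1,v_2))$. Since $\mu_p = \mu_p(e,p)$ is even in $v_1$, the identities $g=f+\mu_p\psi$ and $g_{ev}=f_{ev}+\mu_p\psi$ are consistent, and the odd-in-$v_1$ part of $g$ coincides with $f_{od}$. Orthogonality of the even/odd decomposition in $L^2_{1/|\mu_e|}$ then shows that $\Phi:\mathbf{Z}\to\mathbf{X}$ is an isometric linear isomorphism (after a trivial reordering of coordinates). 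A direct check that equation (\ref{eqn-g-1.5}) and its Maxwell companions in the variables $(g,E_1,E_2,\psi)$ split under $\Phi$ into exactly the pair of systems (\ref{eqn-f-od}), (\ref{eqn-g-ev})--(\ref{eqn-psi}) identifies the linearized flow on $\mathbf{Z}$ with $e^{t\mathbf{JL}}$ on $\mathbf{X}$. Defining $E^{u,c,s} := \Phi^{-1}\tilde E^{u,c,s}$ then transports invariance, the eigenvalue characterization of $E^{u,s}$, the dimension formula $\dim E^{u,s}=n^-(\mathcal{L}^0)$, and the exponential trichotomy estimates (\ref{estimate-stable-unstable})--(\ref{estimate-center}) to the space $\mathbf{Z}$, giving parts (i), (ii), and the first assertion of (iii).

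For the constraint (\ref{constraint-E-g}) I would invoke Remark \ref{remark-constraint-1.5}: because the constraint is preserved by the flow and automatically holds on eigenspaces belonging to nonzero eigenvalues, its imposition on initial data does not alter $E^{u,s}$ or the dimension count, and the decomposition restricts cleanly to the constrained subspace. Finally, for the Lyapunov stability statement, Lemma \ref{lemma-dimension-1.5d} gives $\dim \ker L|_{\overline{R(B)}} = \dim \ker \mathcal{L}^0$, so $\ker \mathcal{L}^0 = \{0\}$ forces $\langle L\cdot,\cdot\rangle$ to be non-degenerate on $\overline{R(B)}$; part (iv) of Theorem \ref{T:abstract} then yields $|e^{t\mathbf{JL}}|_{E^c}|\leq M$ and, via $\Phi$, the uniform bound on the center space in the $\mathbf{Z}$ norm. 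The only non-trivial verification in this plan is the bookkeeping that the dynamics (\ref{eqn-g-1.5}) really pushes forward under $\Phi$ to $\mathbf{JL}$ written with the operators (\ref{operator-L})--(\ref{operator B}); care is needed because the shift $g=f+\mu_p\psi$ mixes the distribution with the magnetic potential and redistributes the linear terms $\mu_e v_2 E_2$ and $-\mu_p \partial_t\psi$ appearing in the derivation of (\ref{eqn-g-ev}), but this is essentially the same calculation already carried out in Section 3 and poses no genuine difficulty.
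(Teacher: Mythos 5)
Your proposal is correct and follows essentially the same route as the paper: the paper proves this theorem precisely by applying Theorem \ref{T:abstract} to the triple $(L,A,B)$ already verified to satisfy (\textbf{G1--3}), inserting the count $n^-(L|_{\overline{R(B)}})=n^-(\mathcal{L}^0)$ and kernel identification from Lemma \ref{lemma-dimension-1.5d}, handling the Poisson constraint via Remark \ref{remark-constraint-1.5}, and restating the result in the unsplit variables $(g,E_1,E_2,\psi)$ through exactly the parity decomposition you describe. Your explicit isometry $\Phi$ and the non-degeneracy argument for the Lyapunov statement (trivial kernel of $\mathcal{L}^0$ implies non-degeneracy of $\langle L\cdot,\cdot\rangle$ on $\overline{R(B)}$, invoking part (iv)) are just the details the paper leaves implicit.
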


By assuming $\int\frac{\left\vert \mu_{p}\right\vert ^{2}}{\left\vert \mu
_{e}\right\vert }dv<\infty$, above Theorem implies the exponential trichotomy
for the linearized VM system (\ref{linearized-V}), (\ref{linearized-M1}%
)-(\ref{linearized-M3}) for $\left(  f,E_{1},E_{2},B\right)  \ $in the norm
\[
\left\Vert f\right\Vert _{L_{\frac{1}{|\mu_{e}|}}^{2}}+\left\Vert
E_{1}\right\Vert _{L^{2}}+\left\Vert E_{2}\right\Vert _{L^{2}}+\left\Vert
B\right\Vert _{L^{2}}.
\]

\section{$3$D Vlasov-Maxwell systems}

The case of 3D Vlasov-Maxwell is rather similar to the 1.5D case. So we will
be more sketchy and only give details when there are significant differences.

As in \cite{lin-strauss1} and \cite{lin-strauss2}, we consider the 3D
relativistic Vlasov-Maxwell system (RVM) for a non-neutral electron plasma
with external fields
\[
\partial_{t}f+\hat{v}\cdot\nabla_{x}f-(\mathbf{E}+\mathbf{E}^{ext}+\hat
{v}\times\left(  \mathbf{B}+\mathbf{B}^{ext}\right)  )\cdot\nabla_{v}f=0
\]%
\begin{equation}
\partial_{t}\mathbf{E}-\nabla\times\mathbf{B}=\int\hat{v}f\text{
}dv=-\mathbf{j} \label{eqn-maxwell-1}%
\end{equation}%
\begin{equation}
\partial_{t}\mathbf{B}+\nabla\times\mathbf{E}=0,\ \ \nabla\cdot\mathbf{B}=0
\label{eqn-maxwell-2}%
\end{equation}%
\begin{equation}
\nabla\cdot\mathbf{E}=-\int f\text{ }dv=\rho,\quad\label{eqn-maxwell-3}%
\end{equation}
where $x\in\mathbb{R}^{3},v\in\mathbb{R}^{3}$. Denote $\left(  r,\theta
,z\right)  $ to be the cylindrical coordinates. The equilibrium distribution
function with cylindrical symmetry is assumed to have the form $f^{0}%
=\mu\left(  e,p\right)  ,$ where
\[
e=\sqrt{1+\left\vert v\right\vert ^{2}}-\phi^{0}\left(  r,z\right)
-\phi^{ext}\left(  r,z\right)  ,
\]%
\[
p=r\left(  v_{\theta}-A_{\theta}^{0}\left(  r,z\right)  -A_{\theta}%
^{ext}\left(  r,z\right)  \right)  ,
\]
are particle energy and momentum, and $\left(  \phi^{0}\left(  r,z\right)
,A_{\theta}^{0}\left(  r,z\right)  \right)  $ and $\left(  \phi^{ext}\left(
r,z\right)  ,A_{\theta}^{ext}\left(  r,z\right)  \right)  $ are self-generated
and external electromagnetic potentials. The steady electromagnetic fields are
given by
\[
\mathbf{E}^{0}=-\partial_{r}\phi^{0}\vec{e}_{r}-\partial_{z}\phi^{0}\vec
{e}_{z},\text{ \ }\mathbf{B}^{0}=-\partial_{z}A_{\theta}^{0}\vec{e}_{r}%
+\frac{1}{r}\partial_{r}\left(  rA_{\theta}^{0}\right)  \vec{e}_{z}.
\]
The steady potentials $\left(  A_{\theta}^{0},\phi^{0}\right)  $ satisfy the
elliptic system%
\begin{equation}
\Delta\phi^{0}=\partial_{zz}\phi^{0}+\partial_{rr}\phi^{0}+\frac{1}{r}%
\partial_{r}\phi^{0}=\int\mu dv, \label{eqn-equi-3d-ele}%
\end{equation}%
\begin{equation}
\left(  \Delta-\frac{1}{r^{2}}\right)  A_{\theta}^{0}=\partial_{zz}A_{\theta
}^{0}+\partial_{rr}A_{\theta}^{0}+\frac{1}{r}\partial_{r}A_{\theta}^{0}%
-\frac{1}{r^{2}}A_{\theta}^{0}=\int\hat{v}_{\theta}\mu dv.
\label{eqn-equi-3d-mag}%
\end{equation}
By choosing $\phi^{ext}$, $A_{\theta}^{ext}$ and $\mu$ properly, steady
solutions satisfying (\ref{eqn-equi-3d-ele})-(\ref{eqn-equi-3d-mag}) were
constructed in \cite{lin-strauss1} with a compact support $S$ for $f^{0}$ in
the $(x,v)$ space and $f^{0},E^{0},B^{0}$ to be differentiable in the whole
space. We assume that $\mu_{e}<0$ on the support $\{\mu>0\}$. The linearized
VM systems are
\begin{equation}
\partial_{t}f+Df-(\mathbf{E}+\hat{v}\times\mathbf{B})\cdot\nabla_{v}f^{0}=0,
\label{linearized-V-3D}%
\end{equation}
coupled with the Maxwell systems (\ref{eqn-maxwell-1})-(\ref{eqn-maxwell-3}).
Here,
\[
D=\hat{v}\cdot\nabla_{x}-\left(  \mathbf{E}^{0}+\mathbf{E}^{ext}+\hat{v}%
\times\left(  \mathbf{B}^{0}+\mathbf{B}^{ext}\right)  \right)  \cdot\nabla_{v}%
\]
is the transport operator with the steady electromagnetic fields. We consider
axi-symmetric perturbations and decompose such $f$ as $f=f_{od}+f_{ev}$ where
$f_{od}\ \left(  f_{ev}\right)  $ is odd (even) in $\left(  v_{r}%
,v_{z}\right)  .$ Then the linearized Vlasov equation (\ref{linearized-V-3D})
can be written as (see \cite{lin-strauss1})
\begin{equation}
\partial_{t}f_{od}+Df_{ev}=\mu_{e}\left(  \hat{v}_{r}E_{r}+\hat{v}_{z}%
E_{z}\right)  -\mu_{p}r\left(  \hat{v}_{r}B_{z}-\hat{v}_{z}B_{r}\right)  ,
\label{eqn-f-od-1}%
\end{equation}
and%
\begin{equation}
\partial_{t}f_{ev}+Df_{od}=\mu_{e}\hat{v}_{\theta}E_{\theta}+\mu_{p}%
rE_{\theta}. \label{eqn-f-ev}%
\end{equation}
Introduce the magnetic potential function $A_{\theta}$ such that
$B_{r}=-\partial_{z}A_{\theta},\ B_{z}=\frac{1}{r}\partial_{r}\left(
rA_{\theta}\right)  $ and
\begin{equation}
\partial_{t}A_{\theta}=-E_{\theta}. \label{eqn-A-theta}%
\end{equation}
Define $g_{ev}=f_{ev}+r\mu_{p}A_{\theta}\,$and note that $r\left(  \hat{v}%
_{r}B_{z}-\hat{v}_{z}B_{r}\right)  =D\left(  rA_{\theta}\right)  $, then we
can get from (\ref{eqn-f-od-1})-(\ref{eqn-f-ev})
\begin{equation}
\partial_{t}f_{od}=-Dg_{ev}+\mu_{e}\left(  \hat{v}_{r}E_{r}+\hat{v}_{z}%
E_{z}\right)  \label{eqn-f-od-3d}%
\end{equation}%
\begin{equation}
\partial_{t}g_{ev}=-Df_{od}+\mu_{e}\hat{v}_{\theta}E_{\theta}.
\label{eqn-g-ev-3d}%
\end{equation}
The Maxwell system (\ref{eqn-maxwell-1})-(\ref{eqn-maxwell-3}) is reduced to
\begin{equation}
\partial_{t}E_{r}=-\partial_{z}B_{\theta}+\int\hat{v}_{r}f_{od}\ dv,\ \partial
_{t}E_{z}=\frac{1}{r}\partial_{r}\left(  rB_{\theta}\right)  +\int\hat{v}%
_{z}f_{od}\ dv, \label{eqn-Er-Ez}%
\end{equation}%
\begin{equation}
\partial_{t}B_{\theta}=-\partial_{z}E_{r}+\partial_{r}E_{z},
\label{eqn-B-theta}%
\end{equation}%
\begin{equation}
\partial_{t}E_{\theta}=\partial_{z}B_{r}-\partial_{r}B_{z}+\int\hat{v}%
_{\theta}f_{ev}dv=L_{0}A_{\theta}+\int\hat{v}_{\theta}g_{ev}dv,
\label{eqn-E-theta}%
\end{equation}%
\[
L_{0}=-\partial_{zz}-\partial_{rr}-\frac{1}{r}\partial_{r}+\frac{1}{r^{2}%
}-\int\hat{v}_{\theta}\mu_{p}dv\text{ }r,
\]
with the constraint
\begin{equation}
\nabla\cdot\mathbf{E}=\frac{1}{r}\partial_{r}\left(  rE_{r}\right)
+\partial_{z}E_{z}=-\int g_{ev}dv+\int r\mu_{p}dv\ A_{\theta}.
\label{constraint-3d}%
\end{equation}
Define
\[
X_{od}=\left\{  f\in L_{\frac{1}{|\mu_{e}|}}^{2}\left(  \mathbf{R}^{3}%
\times\mathbf{R}^{3}\right)  \ |\ f(r,z,-v_{r},v_{\theta},-v_{z}%
)=-f(r,z,v_{r},v_{\theta},v_{z})\text{ }\right\}
\]
and%

\[
X_{ev}=\left\{  f\in L_{\frac{1}{|\mu_{e}|}}^{2}\left(  \mathbf{R}^{3}%
\times\mathbf{R}^{3}\right)  \ |\ f(r,z,-v_{r},v_{\theta},-v_{z}%
)=f(r,z,v_{r},v_{\theta},v_{z})\text{ }\right\}  .
\]
Let $V^{1}$ to be the space of cylindrically symmetric functions $h\left(
r,z\right)  $ such that
\[
\left\Vert h\right\Vert _{V^{1}}=\left(  \int\left(  \left\vert \frac{1}%
{r}\partial_{r}\left(  rh\right)  \right\vert ^{2}+\left\vert \partial
_{z}h\right\vert ^{2}\right)  dx\right)  ^{\frac{1}{2}}=\left\Vert
\nabla\left(  he^{i\theta}\right)  \right\Vert _{L^{2}\left(  \mathbf{R}%
^{3}\right)  }<\infty,
\]
and $L_{s}^{2}$ be the space of cylindrically symmetric functions in
$L^{2}\left(  \mathbf{R}^{3}\right)  $. Let $X=X_{ev}\times\left(  L_{s}%
^{2}\right)  ^{2}\times V^{1}$ and $Y=X_{od}\times\left(  L_{s}^{2}\right)
^{2}$. Define the isometry operator $A:Y\rightarrow Y^{\ast}$ by
\begin{equation}
A=\left(
\begin{array}
[c]{ccc}%
-\frac{1}{\mu_{e}} & 0 & \\
0 & I & \\
&  & I
\end{array}
\right)  \label{defn-A-3d}%
\end{equation}
and $L:X\rightarrow X^{\ast}$ by
\begin{equation}
L=\left(
\begin{array}
[c]{cccc}%
-\frac{1}{\mu_{e}} &  &  & \\
& I &  & \\
&  & I & \\
&  &  & L_{0}%
\end{array}
\right)  . \label{defn-L-3d}%
\end{equation}
Define $B:Y^{\ast}\supset D(B)\rightarrow X$ \ by%
\begin{equation}
B=\left(
\begin{array}
[c]{ccc}%
\mu_{e}D & \mu_{e}\hat{v}_{\theta} & 0\\
-\int\hat{v}_{r}\cdot dv & 0 & -\partial_{z}\\
-\int\hat{v}_{z}\cdot dv & 0 & \frac{1}{r}\partial_{r}\left(  r\cdot\right) \\
0 & -I & 0
\end{array}
\right)  \label{defn-B-3d}%
\end{equation}
and the dual operator $B^{\prime}:X^{\ast}\supset D(B^{\prime})\rightarrow Y$
is%
\[
B^{\prime}=\left(
\begin{array}
[c]{cccc}%
-\mu_{e}D & -\mu_{e}\hat{v}_{r} & -\mu_{e}\hat{v}_{z} & 0\\
\int\mu_{e}\hat{v}_{\theta}\cdot dv & 0 & 0 & -I\\
0 & \partial_{z} & -\partial_{r} & 0
\end{array}
\right)  .
\]
Let $u=\left(  g_{ev},E_{r},E_{z},A_{\theta}\right)  \in X$ and $v=\left(
f_{od},E_{\theta},B_{\theta}\right)  \in Y$, then the linearized $3$D
relativistic Vlasov-Maxwell system (\ref{eqn-A-theta})-(\ref{eqn-E-theta}) can
be written as a separable Hamiltonian form (\ref{hamiltonian-separated}) with
$\left\langle L,A,B\right\rangle $ defined in (\ref{defn-A-3d}%
)-(\ref{defn-B-3d}). We check that the triple $\left\langle L,A,B\right\rangle
$ satisfies assumptions (\textbf{G1-3}) in Section \ref{section-abstract}. We
note that for any $u=\left(  g_{ev},E_{r},E_{z},A_{\theta}\right)  \in X$ ,
\[
\left\langle Lu,u\right\rangle =\iint\frac{1}{|\mu_{e}|}\left\vert
g_{ev}\right\vert ^{2}dvdx+\int\left\vert E_{r}\right\vert ^{2}dx+\int%
\left\vert E_{z}\right\vert ^{2}dx+\left\langle L_{0}A_{\theta},A_{\theta
}\right\rangle ,
\]
where
\[
\left\langle L_{0}A_{\theta},A_{\theta}\right\rangle =\int\left(  \left\vert
\partial_{z}A_{\theta}\right\vert ^{2}+\left\vert \frac{1}{r}\partial
_{r}\left(  rA_{\theta}\right)  \right\vert ^{2}\right)  dx-\int\int r\hat
{v}_{\theta}\mu_{p}\left\vert A_{\theta}\right\vert ^{2}dxdv.
\]
Note that since $f^{0}=\mu\left(  e,p\right)  $ has compact support in $x,v$,
we have
\[
\left\vert \int r\hat{v}_{\theta}\mu_{p}\left\vert A_{\theta}\right\vert
^{2}dxdv\right\vert \lesssim\left\Vert A_{\theta}\right\Vert _{L^{6}}%
^{2}\lesssim\left\Vert \nabla\left(  A_{\theta}e^{i\theta}\right)  \right\Vert
_{L^{2}\left(  \mathbf{R}^{3}\right)  }^{2}=\left\Vert A_{\theta}\right\Vert
_{V^{1}}^{2}.
\]
Moreover, by Lemma 3.1 of \cite{lin-strauss1} and its proof, $\sigma
_{\text{ess}}\left(  L_{0}\right)  =[0,\infty)$ and $L_{0}$ is a relative
compact perturbation of
\[
\left(  -\Delta\right)  _{mag}:=-\partial_{zz}-\partial_{rr}-\frac{1}%
{r}\partial_{r}+\frac{1}{r^{2}}.
\]
Thus there exists a finite co-dimensional subspace $V\subset V^{1}$ such that
\[
\left\langle L_{0}A_{\theta},A_{\theta}\right\rangle \geq c_{0}\left\Vert
A_{\theta}\right\Vert _{V^{1}}^{2},\ \ \forall\ \ A_{\theta}\in V,
\]
for some constant $c_{0}>0$. This proves assumption (G3) and assumptions
(G1)-(G2) are obvious.

Now we compute $n^{-}\left(  L|_{\overline{R\left(  B\right)  }}\right)  $. As
in \cite{lin-strauss1}, we define the following operators acting on the
cylindrically symmetric functions $h=h\left(  r,z\right)  \in L^{2}%
(\mathbb{R}^{3})\ $by
\[
\mathcal{A}_{1}^{0}h=-\partial_{zz}h-\partial_{rr}h-\frac{1}{r}\partial
_{r}h-\int\mu_{e}dvh+\int\mu_{e}\mathcal{P}\left(  h\right)  dv,
\]%
\[
\mathcal{A}_{2}^{0}h=-\partial_{zz}h-\partial_{rr}h-\frac{1}{r}\partial
_{r}h+\frac{1}{r^{2}}h-\int\hat{v}_{\theta}\mu_{p}dv\text{ }rh-\int\hat
{v}_{\theta}\mu_{e}\mathcal{P}\left(  \hat{v}_{\theta}h\right)  dv,
\]%
\[
\mathcal{B}^{0}h=\int\mu_{e}\mathcal{P}\left(  \hat{v}_{\theta}h\right)
dv-\int\hat{v}_{\theta}\mu_{e}dv\text{ }h,
\]
and
\begin{equation}
\mathcal{L}^{0}=\left(  \mathcal{B}^{0}\right)  ^{\ast}\left(  \mathcal{A}%
_{1}^{0}\right)  ^{-1}\mathcal{B}^{0}+\mathcal{A}_{2}^{0},
\label{defn-cal-L0-3d}%
\end{equation}
where $\mathcal{P}$ is the projection operator of $L_{\left\vert \mu
_{e}\right\vert }^{2}$ onto $\ker D$. The properties of these operators are
studied in Lemma 3.1 of \cite{lin-strauss1}. As in the $1\frac{1}{2}$D case,
we have

\begin{lemma}
\label{lemma-dimension-3d}For $L,B$ defined in (\ref{defn-L-3d}) and
(\ref{defn-B-3d}), we have
\[
n^{-}\left(  L|_{\overline{R\left(  B\right)  }}\right)  =n^{-}\left(
\mathcal{L}^{0}\right)  ,\ \dim\ker L|_{\overline{R\left(  B\right)  }}%
=\dim\ker\mathcal{L}^{0}.
\]

\end{lemma}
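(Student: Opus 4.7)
The plan is to follow the proof of Lemma \ref{lemma-dimension-1.5d} essentially verbatim, adapting each step to the cylindrical geometry. As in the $1\frac{1}{2}$D case, I will establish the two quantities $n^{\leq 0}(L|_{\overline{R(B)}})$ and $n^{\leq 0}(\mathcal{L}^0)$ are equal by proving inequalities in both directions, then separately match the kernel dimensions.

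First, given $u=BAv\in R(B)$ with $v=(f_{od},E_\theta,B_\theta)$, the four components of $u$ satisfy $g_{ev}=-Df_{od}+\mu_e\hat v_\theta E_\theta$, $E_r=\int \hat v_r f_{od}\,dv-\partial_z B_\theta$, $E_z=\int\hat v_z f_{od}\,dv+\frac{1}{r}\partial_r(rB_\theta)$, and $A_\theta=-E_\theta$. Substituting into \eqref{quadratic_L}'s 3D analogue produces a quadratic form $W(f_{od},E_\theta,B_\theta)$. Using the inequality $W\ge(\mathcal{L}^0 E_\theta,E_\theta)=(\mathcal{L}^0 A_\theta,A_\theta)$ proved in \cite{lin-strauss1} (completing the square in $f_{od}$ against $\mathcal{P}$ and optimizing in $B_\theta$), one extends by density to all of $\overline{R(B)}$ and obtains $n^{\leq 0}(L|_{\overline{R(B)}})\le n^{\leq 0}(\mathcal{L}^0)$.

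For the reverse direction I will, for each $\psi\in V^1$, define $\phi^\psi=-(\mathcal{A}_1^0)^{-1}\mathcal{B}^0\psi$ and set $f^\psi=r\mu_p\psi-\mu_e\phi^\psi+\mu_e\mathcal{P}(\hat v_\theta\psi+\phi^\psi)$, $g_{ev}^\psi=-f^\psi+r\mu_p\psi$, $E_r^\psi=\partial_r\phi^\psi$, $E_z^\psi=\partial_z\phi^\psi$, giving $u^\psi=(g_{ev}^\psi,E_r^\psi,E_z^\psi,\psi)$. By construction $\Delta_{rz}\phi^\psi=\int f^\psi\,dv$, so $\frac{1}{r}\partial_r(rE_r^\psi)+\partial_z E_z^\psi=-\int(g_{ev}^\psi+\mu_e\hat v_\theta(-\psi))\,dv$ in the sense required to lie in $\overline{R(B)}$. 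The key step is to produce a sequence $h_{od}^n\in X_{od}\cap\mathrm{Dom}(D)$ with $-Dh_{od}^n\to g_{ev}^\psi+\mu_e\hat v_\theta(-\psi)$ in $L^2_{1/|\mu_e|}$ while prescribing the two moments $\iint \hat v_r h_{od}^n\,dx\,dv$ and $\iint \hat v_z h_{od}^n\,dx\,dv$; these moments together with the vanishing of $B_\theta^n$ in the relevant sense will force $(E_r^n,E_z^n)\to(E_r^\psi,E_z^\psi)$. The admissible adjustments come from pairs of odd (in $(v_r,v_z)$) elements of $\ker D$ constructed from untrapped characteristics of the steady flow, following the construction of $\chi$ in the 1.5D proof; one needs two independent such elements, chosen so that the $2\times2$ matrix of their $(\hat v_r,\hat v_z)$ moments is nonsingular. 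A computation paralleling Lemma 2.8 of \cite{lin-strauss1} then shows $\langle L u^\psi,u^\psi\rangle=(\mathcal{L}^0\psi,\psi)$, yielding $n^{\leq 0}(\mathcal{L}^0)\le n^{\leq 0}(L|_{\overline{R(B)}})$.

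For the kernel equality, $u\in\ker L|_{\overline{R(B)}}$ is equivalent to $u\in\overline{R(B)}\cap\ker(B'L)$. Expanding $B'Lu=0$ yields three conditions: $Dg_{ev}=\mu_e(\hat v_r E_r+\hat v_z E_z)$, $L_0A_\theta+\int\hat v_\theta g_{ev}\,dv=0$, and the curl-free condition $\partial_z E_r=\partial_r E_z$. Membership in $\overline{R(B)}$ supplies $\mathcal{P}(g_{ev}+\mu_e\hat v_\theta A_\theta)=0$ and the Gauss law $\frac{1}{r}\partial_r(rE_r)+\partial_z E_z=-\int(g_{ev}+\mu_e\hat v_\theta A_\theta)\,dv$. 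The curl-free condition lets me write $(E_r,E_z)=(\partial_r\phi,\partial_z\phi)$; the argument that no harmonic constant obstruction remains uses, as in 1.5D, that $\mu_e\hat v_r,\mu_e\hat v_z\notin\overline{R(D)}$. Combined with the kernel relation $D(g_{ev}-\mu_e\phi)=0$ (so $(I-\mathcal{P})(g_{ev}-\mu_e\phi)=0$) and $\mathcal{P}(g_{ev}+\mu_e\hat v_\theta A_\theta)=0$, algebraic manipulation produces $\mathcal{A}_1^0\phi=\mathcal{B}^0A_\theta$ and finally $\mathcal{L}^0A_\theta=0$. The reverse map $A_\theta\mapsto u^{A_\theta}$ from the construction in the previous paragraph provides the inverse.

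I expect the main obstacle to be the approximation step in the second paragraph. In 1.5D a single scalar moment $\int\int v_1 h_{od}^n\,dxdv$ had to be controlled via a one-parameter family of adjustments, but here one must simultaneously control two moments and verify that the resulting $(E_r^n,E_z^n)$ converge to a gradient field (not just satisfy the scalar Gauss law), which is exactly what is needed to close the density argument. The compact support of $f^0$ and the cylindrical geometry of the untrapped set, together with the invertibility of the $(\hat v_r,\hat v_z)$ moment matrix for suitable $\ker D$ functions, should make this feasible; everything else is a bookkeeping exercise combining the $1\frac{1}{2}$D proof with the operator identities recorded in Lemma 3.1 of \cite{lin-strauss1}.
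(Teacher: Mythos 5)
Your overall architecture matches the paper's: the upper bound via $W(f_{od},E_\theta,B_\theta)\geq(\mathcal{L}^0E_\theta,E_\theta)$ from \cite{lin-strauss1}, the test element $u^{A_\theta}=(g_{ev}^{A_\theta},E_r^{A_\theta},E_z^{A_\theta},A_\theta)$ built from $\phi^{A_\theta}=-(\mathcal{A}_1^0)^{-1}\mathcal{B}^0A_\theta$ for the lower bound, and the identification of $\ker L|_{\overline{R(B)}}$ with $\ker\mathcal{L}^0$ through $u\in\overline{R(B)}\cap\ker(B'L)$ are all as in the paper. However, the step you yourself single out as the main obstacle --- proving $u^{A_\theta}\in\overline{R(B)}$ --- is where your plan has a genuine gap. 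You propose to take $B_\theta^n=0$ and force $(E_r^n,E_z^n)=(\int\hat v_r h_{od}^n\,dv,\int\hat v_z h_{od}^n\,dv)$ to converge to the gradient field $(\partial_r\phi^{A_\theta},\partial_z\phi^{A_\theta})$ by adjusting $h_{od}^n$ with two elements of $\ker D$ whose $(\hat v_r,\hat v_z)$ moment matrix is nonsingular. This cannot work: convergence of $-Dh_{od}^n$ only controls the divergence $\frac1r\partial_r(r\int\hat v_r h_{od}^n dv)+\partial_z\int\hat v_z h_{od}^n dv$, and the discrepancy between the moment field and the desired gradient field is an arbitrary divergence-free axisymmetric field in $(r,z)$ --- an infinite-dimensional obstruction, which no finite number of moment adjustments can remove. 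The analogy with the $1\frac12$D case is misleading precisely here: in one space dimension a divergence-free (i.e.\ constant) discrepancy is one-dimensional, so a single $\chi\in\ker D$ suffices; in the axisymmetric $3$D reduction it is not.

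The paper closes this step differently, and this is the one place where the $3$D proof genuinely departs from the $1\frac12$D one: it uses the extra component $B_\theta$ of $Y$ rather than kernel elements of $D$. The observation recorded in the paper is that if $h\in Dom(D)\cap L^2_{1/|\mu_e|}$, $\Delta\phi=\int Dh\,dv$ and $(E_r,0,E_z)=\nabla\phi$, then the field $\bigl(E_r-\int\hat v_r h\,dv,\,0,\,E_z-\int\hat v_z h\,dv\bigr)$ is divergence-free with no $\theta$-component, hence of the form $-\partial_z B_\theta\,\vec e_r+\frac1r\partial_r(rB_\theta)\,\vec e_z$ for some $B_\theta\in L^2_s$. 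So after approximating $g_{ev}^{A_\theta}+\mu_e\hat v_\theta A_\theta\in\overline{R(D)}$ by $-Dh_{od}^n$, one simply solves for $B_\theta^n$ to make the $(E_r,E_z)$ components of $BA(h_{od}^n,-A_\theta,B_\theta^n)$ exactly the gradient field associated with $\int Dh_{od}^n\,dv$; no moment conditions on $h_{od}^n$ are needed at all, and the density argument closes. (Relatedly, in your kernel computation the appeal to $\mu_e\hat v_r,\mu_e\hat v_z\notin\overline{R(D)}$ to remove a ``constant obstruction'' is not needed in the whole-space axisymmetric setting; the paper passes directly from $\partial_zE_r=\partial_rE_z$ to $E_r=\partial_r\phi$, $E_z=\partial_z\phi$ and proceeds.) With the $B_\theta$ device replacing your moment-matrix construction, the rest of your outline goes through as written.
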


\begin{proof}
The proof is similar to that of Lemma \ref{lemma-dimension-1.5d}. For any
$u=\left(  g_{ev},E_{r},E_{z},A_{\theta}\right)  \in R\left(  B\right)
=R\left(  BA\right)  $, let $u=BAv$ where $v=$ $\left(  f_{od},E_{\theta
},B_{\theta}\right)  \in Y$. Then%
\begin{align*}
g_{ev}  &  =-Df_{od}+\mu_{e}\hat{v}_{\theta}E_{\theta},\ A_{\theta}%
=-E_{\theta},\\
E_{r}  &  =-\partial_{z}B_{\theta}+\int\hat{v}_{r}f_{od}\ dv,\ E_{z}=\frac
{1}{r}\partial_{r}\left(  rB_{\theta}\right)  +\int\hat{v}_{z}f_{od}\ dv,
\end{align*}
and
\begin{align*}
&  \left\langle Lu,u\right\rangle \\
&  =\iint\frac{1}{\left\vert \mu_{e}\right\vert }\left\vert Df_{od}-\mu
_{e}\hat{v}_{\theta}E_{\theta}\right\vert ^{2}dxdv-\int r\hat{v}_{\theta}%
\mu_{p}\left\vert E_{\theta}\right\vert ^{2}dxdv+\int\left(  \left\vert
\frac{1}{r}\partial_{r}\left(  rE_{\theta}\right)  \right\vert ^{2}+\left\vert
\partial_{z}E_{\theta}\right\vert ^{2}\right)  dx\\
&  +\int\left(  \left\vert -\partial_{z}B_{\theta}+\int\hat{v}_{r}%
f_{od}\ dv\right\vert ^{2}+\left\vert \frac{1}{r}\partial_{r}\left(
rB_{\theta}\right)  +\int\hat{v}_{z}f_{od}\ dv\right\vert ^{2}\right)  dx\\
&  :=W\left(  f_{od},E_{\theta},B_{\theta}\right)  .
\end{align*}
It was shown in \cite{lin-strauss1} that $W\left(  f_{od},E_{\theta}%
,B_{\theta}\right)  \geq\left(  \mathcal{L}^{0}E_{\theta},E_{\theta}\right)
$. Thus $\left\langle Lu,u\right\rangle \geq\left(  \mathcal{L}^{0}A_{\theta
},A_{\theta}\right)  $ for any $u=\left(  g_{ev},E_{r},E_{z},A_{\theta
}\right)  \in R\left(  B\right)  $, which yields $n^{\leq0}\left(
L|_{\overline{R\left(  B\right)  }}\right)  \leq n^{\leq0}\left(
\mathcal{L}^{0}\right)  $ as in the $1\frac{1}{2}$D case.

Next, we show $n^{\leq0}\left(  L|_{\overline{R\left(  B\right)  }}\right)
\geq n^{\leq0}\left(  \mathcal{L}^{0}\right)  $. For any $A_{\theta}\in V^{1}%
$, we define
\[
\phi^{A_{\theta}}=-\left(  \mathcal{A}_{1}^{0}\right)  ^{-1}\mathcal{B}%
^{0}A_{\theta},\ \ f^{A_{\theta}}=r\mu_{p}A_{\theta}-\mu_{e}\phi^{A_{\theta}%
}+\mu_{e}\mathcal{P}(\hat{v}_{\theta}A_{\theta}+\phi^{A_{\theta}}).
\]
By the definition of $\phi^{A_{\theta}}$, we have $\Delta\phi^{A_{\theta}%
}=\int f^{A_{\theta}}dv$. Define
\begin{equation}
E_{r}^{A_{\theta}}=\partial_{r}\phi^{A_{\theta}},\ E_{z}^{A_{\theta}}%
=\partial_{z}\phi^{A_{\theta}},\ \ g_{ev}^{A_{\theta}}=-f^{A_{\theta}}%
+r\mu_{p}A_{\theta}. \label{formula-u-A-th}%
\end{equation}
Then $u^{A_{\theta}}=\left(  g_{ev}^{A_{\theta}},E_{r}^{A_{\theta}}%
,E_{z}^{A_{\theta}},A_{\theta}\right)  \in\overline{R\left(  B\right)  }$. We
skip the proof since it is similar to the $1\frac{1}{2}$D case. We only point
out that the following observation is used. Let $h\in Dom\left(  D\right)
\cap$ $L_{\frac{1}{|\mu_{e}|}}^{2}$, if $\Delta\phi=\int Dhdv$ and $\left(
E_{r},0,E_{z}\right)  =\nabla\phi$, then there exists $B_{\theta}\in L_{s}%
^{2}$ such that
\[
E_{r}=-\partial_{z}B_{\theta}+\int\hat{v}_{r}h\ dv,\ E_{z}=\frac{1}{r}%
\partial_{r}\left(  rB_{\theta}\right)  +\int\hat{v}_{z}h\ dv,
\]
which is due to
\[
\frac{1}{r}\partial_{r}\left(  r\left(  E_{r}-\int\hat{v}_{r}h\ dv\right)
\right)  +\partial_{z}\left(  E_{z}-\int\hat{v}_{z}h\ dv\right)  =\Delta
\phi-\int Dhdv=0.
\]
It is easy to check that $\left(  \mathcal{L}^{0}A_{\theta},A_{\theta}\right)
=L\left(  u^{A_{\theta}},u^{A_{\theta}}\right)  $. This shows that $n^{\leq
0}\left(  L|_{\overline{R\left(  B\right)  }}\right)  \geq n^{\leq0}\left(
\mathcal{L}^{0}\right)  $ and consequently $n^{\leq0}\left(  L|_{\overline
{R\left(  B\right)  }}\right)  =n^{\leq0}\left(  \mathcal{L}^{0}\right)  $.

It remains to prove $\dim\ker L|_{\overline{R\left(  B\right)  }}=\dim
\ker\mathcal{L}^{0}$. If $u=\left(  g_{ev},E_{r},E_{z},A_{\theta}\right)
\in\ker L|_{\overline{R\left(  B\right)  }}$, then $u\in\overline{R\left(
B\right)  }\cap\ker\left(  B^{\prime}L\right)  $. Thus
\begin{equation}
Dg_{ev}-\mu_{e}\hat{v}_{r}E_{r}-\mu_{e}\hat{v}_{z}E_{z}=0,
\label{eqn-ker-g-3d}%
\end{equation}%
\begin{equation}
L_{0}A_{\theta}+\int\hat{v}_{\theta}g_{ev}dv=0, \label{eqn-ker-A-th-3d}%
\end{equation}%
\begin{equation}
\partial_{z}E_{r}-\partial_{r}E_{z}=0. \label{eqn-ker-E-th-3d}%
\end{equation}
By (\ref{eqn-ker-E-th-3d}), there exists a potential function $\phi\left(
r,z\right)  $ such that $E_{r}=\partial_{r}\phi$ and $E_{z}=\partial_{z}\phi$.
Since $u\in\overline{R\left(  B\right)  },$ it follows that
\begin{equation}
\Delta\phi=\nabla\cdot\left(  E_{r},0,E_{z}\right)  =-\int\left(  g_{ev}%
+\mu_{e}\hat{v}_{\theta}A_{\theta}\right)  dv \label{eqn-phi-ker-3d}%
\end{equation}
By (\ref{eqn-ker-g-3d}), $D\left(  g_{ev}-\mu_{e}\phi\right)  =0$ which
implies $\left(  I-\mathcal{P}\right)  \left(  g_{ev}-\mu_{e}\phi\right)  =0$.
Since $u\in\overline{R\left(  B\right)  },\ \mathcal{P}\left(  g_{ev}+\mu
_{e}\hat{v}_{\theta}A_{\theta}\right)  =0$. Thus
\begin{equation}
g_{ev}=\mu_{e}\phi-\mu_{e}\mathcal{P}\left(  \hat{v}_{\theta}A_{\theta
}\right)  +\mu_{e}\mathcal{P}\phi. \label{formula-g-ev-ker-3d}%
\end{equation}
Combining (\ref{eqn-ker-A-th-3d}), (\ref{eqn-phi-ker-3d}) and
(\ref{formula-g-ev-ker-3d}), we get $\mathcal{L}^{0}A_{\theta}=0$. On the
other hand, if $\mathcal{L}^{0}A_{\theta}=0$, we define $u^{A_{\theta}%
}=\left(  g_{ev}^{A_{\theta}},E_{r}^{A_{\theta}},E_{z}^{A_{\theta}},A_{\theta
}\right)  \in\overline{R\left(  B\right)  }$ by (\ref{formula-u-A-th}). Then
reversing the above computation, we have $u^{A_{\theta}}\in\ker\left(
L|_{\overline{R\left(  B\right)  }}\right)  $. This shows that $\dim\ker
L|_{\overline{R\left(  B\right)  }}=\dim\ker\mathcal{L}^{0}$.
\end{proof}

Let $g=f+r\mu_{p}A_{\theta}$, which satisfies
\[
g_{t}=-Dg+\mu_{e}\left(  \hat{v}_{r}E_{r}+\hat{v}_{z}E_{z}+\hat{v}_{\theta
}E_{\theta}\right)  .
\]
Then we can study the equivalent linearized Vlasov-Maxwell system for $\left(
g,A_{\theta},B_{\theta},E_{\theta},E_{r},E_{z}\right)  $, where $\left(
A_{\theta},E_{r},E_{z},E_{\theta},B_{\theta}\right)  $ satisfy
\[
\partial_{t}A_{\theta}=-E_{\theta},\ \ \partial_{t}B_{\theta}=-\partial
_{z}E_{r}+\partial_{r}E_{z},\ \ \partial_{t}E_{\theta}=L_{0}A_{\theta}%
+\int\hat{v}_{\theta}g\ dv
\]%
\[
\partial_{t}E_{r}=-\partial_{z}B_{\theta}+\int\hat{v}_{r}g\ dv,\ \partial
_{t}E_{z}=\frac{1}{r}\partial_{r}\left(  rB_{\theta}\right)  +\int\hat{v}%
_{z}g\ dv,
\]
with the constraint
\begin{equation}
\frac{1}{r}\partial_{r}\left(  rE_{r}\right)  +\partial_{z}E_{z}=-\int
gdv+\int r\mu_{p}dv\ A_{\theta}. \label{constraint-3d-g}%
\end{equation}
As in the $1\frac{1}{2}$D case (Remark \ref{remark-constraint-1.5}), the
constraint (\ref{constraint-3d-g}) is automatically satisfied on the
eigenspaces of nonzero eigenvalues.

\begin{theorem}
Consider the linearized relativistic Vlasov-Maxwell system for $\left(
g,A_{\theta},B_{\theta},E_{\theta},E_{r},E_{z}\right)  $, with axi-symmetric
initial data in the space
\[
Z=L_{\frac{1}{|\mu_{e}|}}^{2}\left(  \mathbf{R}^{3}\times\mathbf{R}%
^{3}\right)  \times V^{1}\times\left(  L_{s}^{2}\left(  \mathbf{R}^{3}\right)
\right)  ^{4}%
\]
satisfying the constraint (\ref{constraint-3d-g}).Then

i) The solution mapping is strongly continuous in the space $\mathbf{Z}$ and
there exists a decomposition%
\[
\mathbf{Z}=E^{u}\oplus E^{c}\oplus E^{s},\quad
\]
of closed subspaces $E^{u,s,c}$ with the following properties:

i) $E^{c},E^{u},E^{s}$ are invariant under the linearized RVM system.

ii) $E^{u}\left(  E^{s}\right)  $ only consists of eigenvectors corresponding
to negative (positive) eigenvalues of the linearized system and
\[
\dim E^{u}=\dim E^{s}=n^{-}\left(  \mathcal{L}^{0}\right)  ,
\]
where $\mathcal{L}^{0}$ is defined in (\ref{defn-cal-L0-3d}). In particular,
$\mathcal{L}^{0}\geq0$ implies spectral stability.

iii) The exponential trichotomy is true in the space $Z$ in the sense of
(\ref{estimate-stable-unstable})-(\ref{estimate-center}). Moreover, if
$\ker\mathcal{L}=\left\{  0\right\}  $, then Liapunov stability is true under
the norm $\left\Vert {}\right\Vert _{Z}\ $on the center space $E^{c}$.
\end{theorem}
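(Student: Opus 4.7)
The plan is to deduce the theorem directly from Theorem \ref{T:abstract} applied to the separable Hamiltonian formulation just established, combined with the dimension identity in Lemma \ref{lemma-dimension-3d} and the constraint-preservation argument from Remark \ref{remark-constraint-1.5}. Concretely, with $u = (g_{ev}, E_r, E_z, A_\theta) \in X$ and $v = (f_{od}, E_\theta, B_\theta) \in Y$, the linearized axi-symmetric system (\ref{eqn-A-theta})--(\ref{eqn-E-theta}) together with (\ref{eqn-f-od-3d})--(\ref{eqn-g-ev-3d}) is exactly the separable Hamiltonian PDE (\ref{hamiltonian-separated}) for the triple $(L,A,B)$ from (\ref{defn-L-3d})--(\ref{defn-B-3d}). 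Assumptions (\textbf{G1})--(\textbf{G2}) are built into the definitions, while (\textbf{G3}) was verified just above Lemma \ref{lemma-dimension-3d} using the essential-spectrum description $\sigma_{\mathrm{ess}}(L_0) = [0,\infty)$, the relative compactness of the $\mu_p$-perturbation, and the $L^6$ Sobolev bound on the potential term in $\langle L_0 A_\theta, A_\theta\rangle$.

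Next I would invoke Theorem \ref{T:abstract} on $\mathbf{X} := X \times Y$ to obtain the $C^0$ group $e^{t\mathbf{JL}}$, the decomposition $\mathbf{X} = E^u \oplus E^c \oplus E^s$, the eigenvalue characterization of $E^{u,s}$, the dimension formula $\dim E^u = \dim E^s = n^-(L|_{\overline{R(B)}})$, and the exponential trichotomy estimates (\ref{estimate-stable-unstable})--(\ref{estimate-center}). Lemma \ref{lemma-dimension-3d} then rewrites this dimension as $n^-(\mathcal{L}^0)$ and, crucially, also yields $\dim \ker L|_{\overline{R(B)}} = \dim \ker \mathcal{L}^0$. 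The latter identity is exactly the hypothesis needed by part (iv) of Theorem \ref{T:abstract} in order to upgrade (\ref{estimate-center}) to Lyapunov stability on $E^c$ whenever $\ker \mathcal{L}^0 = \{0\}$.

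To accommodate the Poisson constraint (\ref{constraint-3d-g}), I would compute from the evolution equations that
\[
\partial_t\!\left(\tfrac{1}{r}\partial_r(rE_r) + \partial_z E_z + \int g\,dv - \int r\mu_p\,dv\,A_\theta\right) = 0,
\]
so the constraint is preserved by the flow. Any eigenvector with eigenvalue $\lambda \neq 0$ therefore satisfies (\ref{constraint-3d-g}) automatically (divide by $\lambda$), so $E^{u,s}$ already lie in the constraint hyperplane and the estimates on them carry over; for $E^c$ one simply restricts to the closed subspace of constraint-satisfying data, which is invariant by the preceding identity. Finally, the parity splitting $f = f_{ev} + f_{od}$ together with the pointwise change of variables $g_{ev} = f_{ev} + r\mu_p A_\theta$, $g = f + r\mu_p A_\theta$, implements a bounded invertible correspondence between the even/odd formulation on $\mathbf{X}$ and the unsplit formulation on $Z$, transferring every conclusion to the space of the theorem.

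The main obstacle, as in the $1\tfrac{1}{2}$D case but more delicate here, is the careful handling of the Poisson constraint (\ref{constraint-3d-g}), which in 3D couples $E_r,E_z,g$ and $A_\theta$ nontrivially. One has to verify not only the preservation identity displayed above but also that restricting to the closed constraint hyperplane does not enlarge $E^c$ beyond what Theorem \ref{T:abstract} controls; this is what makes it legitimate to state the trichotomy directly on the constrained space $Z$. Once this is in place, the proof reduces to invoking Theorem \ref{T:abstract} together with Lemma \ref{lemma-dimension-3d} and unwinding the parity decomposition.
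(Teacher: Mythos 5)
Your proposal is correct and follows essentially the same route as the paper, which gives no separate proof of this theorem precisely because it is intended to follow from Theorem \ref{T:abstract} applied to the verified triple $(L,A,B)$ of (\ref{defn-L-3d})--(\ref{defn-B-3d}), the dimension and kernel identities of Lemma \ref{lemma-dimension-3d}, and the constraint-preservation observation carried over from Remark \ref{remark-constraint-1.5}, with the substitution $g=f+r\mu_{p}A_{\theta}$ undoing the parity splitting. Your extra care about the constraint subspace (eigenspaces of nonzero eigenvalues automatically satisfy (\ref{constraint-3d-g}), and the center estimates restrict to the invariant constraint hyperplane) is exactly the argument the paper invokes implicitly.
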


By assuming $\int\frac{\left\vert \mu_{p}\right\vert ^{2}}{\left\vert \mu
_{e}\right\vert }dv<\infty$, above Theorem implies the exponential trichotomy
for the original linearized RVM system (\ref{linearized-V}),
(\ref{linearized-M1})-(\ref{linearized-M3}) for $\left(  f,\mathbf{E}%
,\mathbf{B}\right)  \ $in the norm $\left\Vert f\right\Vert _{L_{\frac{1}%
{|\mu_{e}|}}^{2}}+\left\Vert \mathbf{E}\right\Vert _{L^{2}}+\left\Vert
\mathbf{B}\right\Vert _{L^{2}}$, where $\mathbf{E=}\left(  E_{r},E_{\theta
},E_{z}\right)  ,\mathbf{B=}\left(  B_{r},B_{\theta},B_{z}\right)  .$

\begin{center}
{\Large Acknowledgement}
\end{center}

This paper is dedicated to the memory of Robert Glassey. This work is
supported partly by the NSF grant DMS-2007457.

\bigskip

\end{document}